\documentclass[12pt]{amsart}

\usepackage{todonotes}
\usepackage[utf8]{inputenc}
\usepackage[T1]{fontenc}
\usepackage[english]{babel}
\usepackage{amsthm}
\usepackage{amscd}
\usepackage{amsfonts}         
\usepackage{amsmath}
\usepackage{amssymb}
\usepackage{hyperref}
\usepackage{enumerate}
\usepackage{tikz}
\usepackage{tikz-cd}
\usepackage{mathrsfs}  
\usepackage{hyperref}
\usepackage{nicefrac}
\usepackage{stix}
\usepackage{quiver}
\usepackage{comment}
\usepackage[usenames, dvipsnames]{xcolor} 
\usepackage{aliascnt}
\hypersetup{
    colorlinks, linkcolor=NavyBlue,
    citecolor=OliveGreen, urlcolor=RedOrange
}
\usepackage[nameinlink,capitalise,noabbrev]{cleveref}  

\newcommand{\Ab}{\mathbb{A}}

\newcommand{\Lb}{\mathbb{L}}

\newcommand{\Nb}{\mathbb{N}}

\newcommand{\Pb}{\mathbb{P}}
\newcommand{\Qb}{\mathbb{Q}}

\newcommand{\Tb}{\mathbb{T}}

\newcommand{\Zb}{\mathbb{Z}}

\newcommand{\Ec}{\mathcal{E}}
\newcommand{\Fc}{\mathcal{F}}

\newcommand{\Oc}{\mathcal{O}}
\newcommand{\Pc}{\mathcal{P}}

\newcommand{\Xfr}{\mathfrak{X}}
\newcommand{\Yfr}{\mathfrak{Y}}

\newcommand{\Mbf}{\mathbf{M}}

\newcommand{\CH}{\mathrm{CH}}

\newcommand{\pr}{\mathrm{pr}}

\newcommand{\Gr}{\mathrm{Gr}}

\newcommand{\hook}{\hookrightarrow}

\newcommand{\Id}{\mathrm{Id}}

\newcommand{\red}{\mathrm{red}}

\newcommand{\xto}{\xrightarrow}

\newcommand{\tto}{\twoheadrightarrow}

\newcommand{\MS}{\mathrm{MS}}
\newcommand{\SH}{\mathrm{SH}}

\newcommand{\Mod}{\mathrm{Mod}}
\newcommand{\Sm}{\mathrm{Sm}}

\newcommand{\Ani}{\mathrm{Ani}}

\newcommand{\Pure}{\mathrm{Pure}}
\newcommand{\DM}{\mathrm{DM}}
\newcommand{\Sp}{\mathrm{Sp}}
\newcommand{\Nis}{\mathrm{Nis}}

\newcommand{\res}{\mathrm{res}}

\newcommand{\crys}{\mathrm{crys}}

\renewcommand{\H}{\mathrm{H}}

\newcommand{\kgl}{\mathrm{kgl}}
\newcommand{\KGL}{\mathrm{KGL}}
\newcommand{\KH}{\mathrm{KH}}
\newcommand{\Var}{\mathrm{Var}}
\newcommand{\Fil}{\mathrm{Fil}}
\newcommand{\Hod}{\mathrm{Hod}}
\newcommand{\BM}{\mathrm{BM}}

\usepackage{relsize}
\usepackage[bbgreekl]{mathbbol}
\DeclareSymbolFontAlphabet{\mathbb}{AMSb} 
\DeclareSymbolFontAlphabet{\mathbbl}{bbold}

\newcommand{\dR}{\mathrm{dR}}

\theoremstyle{plain}

\newtheorem{introtheorem}{Theorem}

\newaliascnt{thm}{theo}
\newtheorem{thm}[thm]{Theorem}
\aliascntresetthe{thm}

\newaliascnt{lem}{theo}
\newtheorem{lem}[lem]{Lemma}
\aliascntresetthe{lem}

\newaliascnt{prop}{theo}
\newtheorem{prop}[prop]{Proposition}
\aliascntresetthe{prop}

\newaliascnt{cor}{theo}
\newtheorem{cor}[cor]{Corollary}
\aliascntresetthe{cor}

\newaliascnt{conj}{theo}
\newtheorem{conj}[conj]{Conjecture}
\aliascntresetthe{conj}

\newaliascnt{con}{theo}

\aliascntresetthe{con}

\newtheorem*{thm*}{Theorem}
\newtheorem*{lem*}{Lemma}
\newtheorem*{prop*}{Proposition}
\newtheorem*{cor*}{Corollary}
\newtheorem*{conj*}{Conjecture}

\theoremstyle{definition}

\newaliascnt{defn}{theo}
\newtheorem{defn}[defn]{Definition}
\aliascntresetthe{defn}

\newaliascnt{ex}{theo}
\newtheorem{ex}[ex]{Example}
\aliascntresetthe{ex}

\newaliascnt{cons}{theo}

\aliascntresetthe{cons}

\newaliascnt{rem}{theo}
\newtheorem{rem}[rem]{Remark}
\aliascntresetthe{rem}

\newaliascnt{war}{theo}
\newtheorem{war}[war]{Warning}
\aliascntresetthe{war}

\newaliascnt{rec}{theo}
\newtheorem{rec}[rec]{Recollection}
\aliascntresetthe{rec}

\crefname{theo}{Theorem}{Theorems}
\Crefname{theo}{Theorem}{Theorems}

\crefname{thm}{Theorem}{Theorems}
\Crefname{thm}{Theorem}{Theorems}

\crefname{lem}{Lemma}{Lemmas}
\Crefname{lem}{Lemma}{Lemmas}

\crefname{prop}{Proposition}{Propositions}
\Crefname{prop}{Proposition}{Propositions}

\crefname{cor}{Corollary}{Corollaries}
\Crefname{cor}{Corollary}{Corollaries}

\crefname{conj}{Conjecture}{Conjectures}
\Crefname{conj}{Conjecture}{Conjectures}

\crefname{defn}{Definition}{Definitions}
\Crefname{defn}{Definition}{Definitions}

\crefname{ex}{Example}{Examples}
\Crefname{ex}{Example}{Examples}

\crefname{cons}{Construction}{Constructions}
\Crefname{cons}{Construction}{Constructions}

\crefname{conj}{Conjecture}{Conjectures}
\Crefname{conj}{Conjecture}{Conjectures}

\crefname{rem}{Remark}{Remarks}
\Crefname{rem}{Remark}{Remarks}

\crefname{war}{Warning}{Warnings}
\Crefname{war}{Warning}{Warnings}

\crefname{rec}{Recollection}{Recollections}
\Crefname{rec}{Recollection}{Recollections}

\setuptodonotes{linecolor=blue!80, color=blue!35}

\DeclareMathOperator{\Thorn}{\text{\th}}

\newif\ifhascomments \hascommentstrue
\ifhascomments
  \newcommand{\matt}[1]{{\color{red}[[\ensuremath{\spadesuit\spadesuit\spadesuit} #1]]}}
  \newcommand{\toni}[1]{{\color{red}[[\ensuremath{\bowtie\bowtie\bowtie} #1]]}}
   \newcommand{\evan}[1]{{\color{red}[[\ensuremath{\natural\natural\natural} #1]]}}
\else
  \newcommand{\evan}[1]{}
  \newcommand{\toni}[1]{}
  \newcommand{\matt}[1]{}
\fi

\title{Grothendieck ring of varieties and at-the-characteristic numerical invariants}
\author{Toni Annala, Matthew Satriano, and Evan Sundbo}

\thanks{TA was supported by the Natural Sciences and Engineering Research
Council of Canada (NSERC), funding reference number RGPIN-2026-04580. Part of this research was carried out while TA was visiting the Simons Laufer Mathematical Sciences Institute (SLMath), which is supported by the National Science Foundation (Grant No. DMS-2424139).}
\thanks{MS was partially supported by an NSERC Discovery Grant.}

\address{Toni Annala, Department of Mathematics, University of Toronto, Toronto}
\email{toni.annala@utoronto.ca}

\address{Matthew Satriano, Department of Pure Mathematics, University of Waterloo}
\email{msatrian@uwaterloo.ca}

\address{Evan Sundbo, Department of Pure Mathematics, University of Waterloo}
\email{e2sundbo@uwaterloo.ca}

\date{}
\begin{document}

\maketitle

\begin{abstract} 
Assuming a weak form of resolution of singularities in positive characteristic, we show several at-the-characteristic numerical invariants, including Hodge numbers, are given by homomorphisms out of the Grothendieck ring of varieties. Under the same assumption, we obtain two consequences: (i) $\widehat K$-equivalent varieties have the same Hodge numbers, answering a question of de Fernex--Mere; and (ii) universally homeomorphic varieties need not have the same class in the Grothendieck ring of varieties, answering a question of Nicaise--Sebag. In the appendix, we provide unconditional evidence for (i) by proving that birational Calabi--Yau varieties of dimension at most four have the same Hodge numbers.
\end{abstract}

\tableofcontents

\section{Introduction}

The Grothendieck ring of varieties is the universal receptacle for invariants of algebraic varieties that are additive under decompositions and multiplicative under products. If $k$ is a field, the ring $K_0(\Var_k)$ is generated by isomorphism classes of finite type $k$-schemes, subject to the scissor relations
\[
[X]=[Z]+[X\setminus Z]
\]
for closed embeddings $Z\hookrightarrow X$. Many familiar invariants, such as compactly supported Betti numbers, arise by applying realization homomorphisms to this ring or to suitable modifications of it. 

The realization morphisms play an important role in studying the structure of the Grothendieck ring. Usually, these homomorphisms are constructed using cohomology theories, of which there is an abundance in algebraic geometry. Despite the success of this approach, in positive characteristic $p > 0$ the cohomology groups on which these realization homomorphisms are based do not typically have any $p$-torsion. This is unfortunate as many of the most intriguing numerical invariants of varieties, such as Hodge numbers, dimensions of algebraic de Rham cohomology, and rank of torsion in crystalline cohomology, are based on such cohomology theories. 

In the present work we remedy the situation by using recent advances in non-$\Ab^1$-invariant motivic homotopy theory, at least conditionally on \emph{weak resolution of singularities by projective modifications}, meaning that for every reduced $k$-scheme $X$, there exists a projective birational morphism $X' \to X$ from a smooth $k$-variety $X'$.  We do this by utilizing the category of resolvable motives $\DM^\res(k)$ introduced recently by the first author and Pstragowski \cite{annala-pstragowski}, which are motives that are built in finitely many steps from Tate twists of smooth projective varieties. There is a Bondarko weight structure on $\DM^\res(k)$ whose weight heart is the category $\Pure(k)$ of perfect pure motives. Consequently, the direct sum $K$-theory of $\Pure(k)$ coincides with $K$-theory of $\DM^\res(k)$ (\cref{prop:K0_of_res_motives}),
\begin{equation}
    K^\oplus_0(\Pure(k)) \cong K_0(\DM^\res(k)).
\end{equation}
This makes it especially easy to define homomorphisms out of $\Thorn_k := K_0(\DM^\res(k))$. Crucially, the $\Ab^1$-invariant and the non-$\Ab^1$-invariant notions of pure and resolvable motives coincide. Consequently, there are realization homomorphisms from $\Thorn_k$ associated to non-$\Ab^1$-invariant cohomology theories. This is the key result that allows us to express, conditionally on weak resolution of singularities, invariants such as Hodge numbers as realization homomorphisms out of the Grothendieck ring of varieties.

\begin{introtheorem}[\cref{sect:K0res}]\label{thm:A}
There are homomorphisms of abelian groups
\begin{align}
    \Hod(k) &\colon \text{\emph{$\Thorn$}}_k \to \Zb[x,y][\nicefrac{1}{xy}], \\
    \dR(k) &\colon \text{\emph{$\Thorn$}}_k \to \Zb[t,t^{-1}], \\
    \H W(k) &\colon \text{\emph{$\Thorn$}}_k \to K^\oplus_0(W(k))[t,t^{-1}]
\end{align}
that on classes of smooth projective varies coincide with the Hodge polynomial, the de Rham polynomial, and the polynomial whose coefficients record the isomorphism type of crystalline cohomology. Moreover, $\Hod(k)$ and $\dR(k)$ are ring homomorphisms.
\end{introtheorem}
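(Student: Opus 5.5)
The plan is to define all three homomorphisms on the weight heart and then transport them along the isomorphism $K^\oplus_0(\Pure(k)) \cong K_0(\DM^\res(k)) = \Thorn_k$ of \cref{prop:K0_of_res_motives}. It therefore suffices to produce additive functions on isomorphism classes of perfect pure motives. Such a function must be additive under direct sums, and, for the ring statement, multiplicative under tensor products.

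An object of $\Pure(k)$ is a direct summand of a finite sum of twists $M(X)(n)[2n]$, with $X$ smooth projective and $n \in \Zb$. The first step is to use that the coincidence of $\Ab^1$-invariant and non-$\Ab^1$-invariant pure motives lets us view $\Pure(k)$ inside the non-$\Ab^1$-invariant motivic category. In that category, Hodge cohomology $\bigoplus_{i,j}\H^j(-,\Omega^i)$, de Rham cohomology, and crystalline cohomology are representable. For crystalline cohomology I would use the motivic spectrum built from derived de Rham/crystalline cohomology with its Frobenius-free structure forgotten, valued in perfect $W(k)$-complexes. Each of these therefore defines an additive realization functor out of $\Pure(k)$.

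Concretely, for Hodge cohomology the functor takes values in bigraded finite-dimensional $k$-vector spaces. Tate twist $(1)[2]$ should act as the shift $(i,j)\mapsto(i+1,j+1)$, coming from the projective bundle formula $\H^*(\Pb^1,\Omega^*)$. For de Rham cohomology the twist shifts degree by $2$. For crystalline cohomology it shifts degree by $2$, and I would ignore Frobenius. Accordingly, I would define
\begin{align*}
\Hod(M)&=\textstyle\sum_{i,j}\dim_k \H^{i,j}(M)\,x^iy^j,\\
\dR(M)&=\textstyle\sum_n \dim_k \H^n_\dR(M)\,t^n,\\
\H W(M)&=\textstyle\sum_n [\H^n_\crys(M)]\,t^n .
\end{align*}
The class $[\H^n_\crys(M)]$ lies in $K^\oplus_0$ of finitely generated $W(k)$-modules, which records the isomorphism type. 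Negative twists explain the Laurent denominators. Because realization functors preserve sums and summands, these functions are additive on isomorphism classes. They therefore descend to $K^\oplus_0(\Pure(k))$. On $M(X)$ with $X$ smooth projective they recover the Hodge polynomial, the de Rham polynomial, and the crystalline polynomial, by construction of the representing objects.

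For multiplicativity, the symmetric monoidal structure on $\DM^\res(k)$ gives $\Thorn_k$ its ring structure. On $\Pure(k)$ this structure is $M(X)\otimes M(Y)=M(X\times Y)$, so I need a Künneth isomorphism. For Hodge cohomology this is the coherent Künneth formula over a field, and for de Rham cohomology it is the Künneth formula for the de Rham complex. In both cases the dimensions multiply because we are working with vector spaces over a field. For crystalline cohomology over $W(k)$ the Künneth formula involves Tor terms, so isomorphism types of cohomology modules do not multiply. This is why only additivity is claimed for $\H W(k)$.

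The main obstacle I expect is the first step. One must check that these non-$\Ab^1$-invariant cohomologies really factor through $\Pure(k)$ as symmetric monoidal functors, including the correct behaviour of Tate twists and the effect of projectors. If they are only known on smooth projective varieties via correspondences, I would instead construct them directly on Chow correspondences: Hodge and de Rham cohomology admit cycle class maps and act via correspondences, and this suffices on $\Pure(k)$. The residual issue is compatibility with the identification in \cref{prop:K0_of_res_motives}, which I would handle by citing the equivalence of pure motives in the two settings.
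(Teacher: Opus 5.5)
Your proposal is correct and follows the paper's argument: you transport along the Bondarko isomorphism $K_0^\oplus(\Pure_\kgl(k))\cong\Thorn_k$ of \cref{prop:K0_of_res_motives}, then evaluate a $\kgl$-linear spectrum representing Hodge, de Rham or crystalline cohomology on pure motives, where it is additive and finitely generated on retracts of sums of $\Sigma^n_\Tb\Mbf^{\Ab^1}_\kgl(X/k)$ (\cref{thm:numerical_invariants}). The one cosmetic difference is that the paper stays inside $\DM^{\Ab^1}_\kgl(k)$ via the $\Ab^1$-colocalized spectra $\Hod(k)^\dagger$, $\dR(k)^\dagger$, $\H W(k)^{\crys,\dagger}$, whose values on smooth projective varieties are unchanged, instead of moving $\Pure(k)$ into $\MS(k)$; both devices rest on Atiyah duality. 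Your K\"unneth argument for multiplicativity of $\Hod(k)$ and $\dR(k)$, which the paper leaves implicit, also works once you note that the natural K\"unneth map for two pure motives is a retract of the one for two finite sums of twisted smooth projective motives, hence an isomorphism.
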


Let us explain how $\Thorn_k$ relates to the Grothendieck ring of varieties. If weak resolution of singularities holds, then there is a ring homomorphism $K_0(\Var_k) \to \Thorn_k$ (\cref{prop:ros_implies_res,prop:K0resolute_maps_to_K0resolvable}) defined by sending a finite-type $k$-scheme to its compactly supported $\Ab^1$-invariant motive, which is defined using six functor formalism on stable $\Ab^1$-homotopy categories \cite{ayoub:thesis1,ayoub:thesis2,hoyois:2017}. Note that the use of stable $\Ab^1$-homotopy categories is crucial here, as six functor formalism has not yet been constructed in the context of non-$\Ab^1$-invariant motives. 

Restricting along the homomorphism $K_0(\Var_k) \to \Thorn_k$, the invariants of \cref{thm:A} define realization homomorphism out of the Grothendieck ring of varieties. Interestingly, they detect information that was previously inaccessible. Our first application is proving that $\widehat K$-equivalent varieties (e.g., birational Calabi--Yau varieties) should have the same Hodge numbers and other related invariants. This answers a question of de Fernex and Mere \cite{defernex-mere}.

\begin{introtheorem}[\cref{cor:Keq_have_same_invariants}]\label{introthm:Keq_have_same_invariants}
Suppose $k$ is a field over which weak resolution of singularities holds. Then $\widehat K$-equivalent (Definition \ref{def:Khat_equivalence}) smooth projective varieties have the same Hodge numbers, de Rham Betti numbers, and isomorphism types of crystalline cohomology.
\end{introtheorem}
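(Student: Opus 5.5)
We will derive the statement from \cref{thm:A} together with a motivic-integration comparison in a suitable completion of the Grothendieck ring. Recall that $X$ and $Y$ are $\widehat K$-equivalent when there is a smooth projective $Z$ with proper birational morphisms $f\colon Z\to X$ and $g\colon Z\to Y$ such that $K_{Z/X}=K_{Z/Y}$, possibly up to the weaker condition in \cref{def:Khat_equivalence}. The standard route goes through the change of variables formula in motivic integration. It gives $[X]=[Y]$ not in $K_0(\Var_k)$ but in the completed localization $\widehat{\mathcal M}_k$ of $\mathcal M_k=K_0(\Var_k)[\Lb^{-1}]$ with respect to the dimension filtration. Over a field of positive characteristic, the change of variables formula for arc spaces still holds for proper birational morphisms of smooth varieties; Denef--Loeser and Sebag work over arbitrary fields. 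So the equality $[X]=[Y]$ in $\widehat{\mathcal M}_k$ is available unconditionally, or is built into the definition of $\widehat K$-equivalence.

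The main step is to extend the composite $K_0(\Var_k)\to\Thorn_k\xto{\Hod(k)}\Zb[x,y][\nicefrac{1}{xy}]$ to $\widehat{\mathcal M}_k$. We will argue in three steps.

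First, since $\Hod(k)$ is a ring homomorphism and $\Lb\mapsto xy$ is a unit, the map factors through $\mathcal M_k$. The value $\Hod([\Pb^1])=1+xy$ follows from the projective bundle formula.

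Second, for continuity, we use that a class $[V]\Lb^{-n}$ with $\dim V-n\le -m$ maps to a Laurent polynomial whose total degree in $(x,y)$ is at most $2(\dim V-n)\le -2m$. By weak resolution, $[V]$ is a $\Zb$-combination of classes $[P]\Lb^{j}$ with $P$ smooth projective and $\dim P+j\le \dim V$. This is exactly how the map $K_0(\Var_k)\to\Thorn_k$ is built, via stratification and blow-up or cdh descent. Hodge polynomials of smooth projective $P$ have degree at most $2\dim P$, so the bound follows.

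Third, we conclude that the realization extends to $\widehat{\mathcal M}_k\to\Zb[x,y][\nicefrac{1}{xy}]^\wedge$, the completion in negative degree, i.e.\ power series in $x^{-1},y^{-1}$. The natural map from the Laurent polynomials into this completion is injective. Hence $[X]=[Y]$ in $\widehat{\mathcal M}_k$ yields $\Hod(X)=\Hod(Y)$, and so equal Hodge numbers.

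The same argument applies to $\dR(k)$ with the degree in $t$, since it is a ring homomorphism sending $\Lb\mapsto t^2$. For the crystalline invariant $\H W(k)$, which is only additive, we instead need compatibility with multiplication by $\Lb$, which is a Tate twist shifting degree by $2$. This requires only that $\H W(k)$ be a map of modules over $\Zb[\Lb]$. We will then use the $t$-adic filtration and the injectivity of $K^\oplus_0(W(k))[t,t^{-1}]$ into its completion in negative $t$-degree, together with the same degree bound. Recovering the isomorphism types of crystalline cohomology from equality in $K_0^\oplus(W(k))$ needs cancellation for finitely generated $W(k)$-modules, which holds by the structure theorem.

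We expect the main obstacle to be the degree bound in the second step for singular, non-proper $V$. It requires knowing that the motive $M^c(V)$ lies in the part of $\DM^\res(k)$ generated by $\Tb$-twists of smooth projective varieties of the right dimension. We would prove this by induction on $\dim V$, using a projective resolution $V'\to V$, the localization triangle, and the abstract blow-up square. At each step we would check that dimensions do not increase.
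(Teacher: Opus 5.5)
Your proposal is correct and follows the paper's route. You get $[X]=[Y]$ in $\widehat{\mathcal M}_k$ by motivic integration; the paper cites this directly as de Fernex--Mere's Theorem~5.6 for the Mather-discrepancy notion of $\widehat K$-equivalence, rather than deriving it from a smooth roof with $K_{Z/X}=K_{Z/Y}$ as you recall the definition. You then extend $\Hod(k)$, $\dR(k)$ and $\H W(k)$ to the completion using the dimension bound you flag as the main obstacle, which the paper packages as the motivic resolution property (\cref{prop:ros_implies_mres}) and proves by the same induction on dimension via localization and birational smooth projective models, while your remarks on $\Zb[\Lb]$-linearity of $\H W(k)$ and cancellation for finitely generated $W(k)$-modules fill in details the paper leaves implicit.
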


Our second application is proving that the Grothendieck ring of varieties should not identify universally homeomorphic varieties, because Hodge numbers are need not coincide for universally homeomorphic varieties. This answers a question of Nicaise and Sebag from \cite{nicaise-sebag:2009}, at least conditionally.

\begin{introtheorem}[\cref{sect:univ_homeo}]
Suppose $k$ is a field of positive characteristic over which weak resolution of singularities holds. Then, there exists a pair of smooth projective surfaces $X$ and $Y$ such that $X$ and $Y$ are universally homeomorphic but $[X] \neq [Y] \in K_0(\Var_k)$.
\end{introtheorem}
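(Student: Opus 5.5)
The plan is to compose the realization homomorphism $\Hod(k)$ of \cref{thm:A} with the map $K_0(\Var_k) \to \Thorn_k$, which exists under weak resolution of singularities (\cref{prop:ros_implies_res,prop:K0resolute_maps_to_K0resolvable}). The composite
\[
K_0(\Var_k) \to \Thorn_k \xto{\Hod(k)} \Zb[x,y][\nicefrac{1}{xy}]
\]
sends the class of a smooth projective variety to its Hodge polynomial. It therefore suffices to exhibit smooth projective surfaces $X$ and $Y$ over $k$ that are universally homeomorphic but have different Hodge numbers $h^{i,j}=\dim H^j(\Omega^i)$. Then $\Hod(k)[X]\neq\Hod(k)[Y]$, and hence $[X]\neq[Y]$.

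For the universal homeomorphism I would use a purely inseparable finite morphism, since finite, surjective, radicial morphisms are universal homeomorphisms. The cleanest source is the relative Frobenius $F_{X/k}\colon X\to X^{(p)}$. However, $X^{(p)}$ is a base change of $X$ along a field automorphism when $k$ is perfect, so it has the same Hodge numbers. I would therefore instead use quotients by $p$-closed foliations, or $\mu_p$/$\alpha_p$-torsors. A concrete choice I expect to work is the following.
\begin{itemize}
\item Take $Y$ a supersingular Enriques surface in characteristic $2$, or a surface of general type with non-reduced Picard scheme.
\item Take $X\to Y$ its canonical $\alpha_2$-cover.
\end{itemize}
This is not quite smooth in general, so a safer route is to use explicit examples from the literature. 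These include Igusa's surface with $h^{0,1}\neq h^{1,0}$, or Raynaud/Mumford-type pathological surfaces, compared with a Frobenius-twisted partner.

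Alternatively, and most robustly, I would look for an inseparable isogeny of abelian surfaces $X\to Y$. For instance, take $X=E\times E$ with $E$ supersingular, and $Y=X/\alpha_p$ for a suitable embedded $\alpha_p\subset X$. The Hodge numbers of abelian surfaces are all the same, so this fails. This shows the invariant must genuinely distinguish the two, and pushes us toward non-abelian examples with $h^{0,1}$ jumping.

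So the key step is to find a concrete pair. My first attempt would be a quotient $Y=X/\mathcal F$ by a $p$-closed foliation, with $X$ smooth and $\mathcal F$ chosen so that $Y$ is smooth; the map $X\to Y$ is then purely inseparable of degree $p$. Candidates are:
\begin{itemize}
\item $X$ a quasi-elliptic or elliptic fibration over $\Pb^1$ whose foliation quotient has different $h^{0,1}$. Here $h^{0,1}(X)=0$ while $Y$ has a non-reduced Picard scheme, as in Lang's or Katsura's examples.
\item $X=\Pb^2$ or a rational surface mapping to a unirational (Zariski) surface $Y$ of general type. Then $h^{0,0}$ agrees but $h^{2,0}(Y)>0=h^{2,0}(X)$.
\end{itemize}
The latter looks best. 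Zariski surfaces $z^p=f(x,y)$, after resolving their singularities, are smooth projective with $p_g>0$, and they receive a purely inseparable map from a rational surface. One must arrange that the relevant map between \emph{smooth projective} models is still a universal homeomorphism. Blowups break this, so the main obstacle is choosing $f$ such that the Zariski surface is already smooth, or carrying out compatible resolutions on both sides with a finite radicial map between them. I would try to realize $Y$ as a smooth hypersurface purely inseparably dominated by a rational surface, e.g., $Y$ the Fermat-type surface of degree $p+1$, which is unirational and smooth. I would then construct $X$ as the normalization of $Y$ in a purely inseparable extension of $k(Y)$ of degree $p$ that is rational, and check smoothness. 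Once the pair is in hand, comparing $h^{2,0}$ completes the argument.
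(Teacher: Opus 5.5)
Your reduction is the same as the paper's. Under weak resolution (for perfect $k$), the composite $K_0(\Var_k)\to\Thorn_k\xto{\Hod(k)}\Zb[x,y][\nicefrac{1}{xy}]$ records the Hodge numbers of smooth projective varieties, so the statement reduces to exhibiting a universal homeomorphism between smooth projective surfaces with different Hodge numbers. That construction is the entire geometric content of the statement, and your proposal never supplies it.

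Your candidates do not get there:
\begin{enumerate}
\item You correctly discard the Frobenius twist over $k$ and inseparable isogenies of abelian surfaces.
\item The $\alpha_2$-cover of a supersingular Enriques surface is, as you note, typically singular.
\item Comparing Igusa's surface with ``a Frobenius-twisted partner'' contradicts your own correct observation that $X^{(p)}$ has the same Hodge numbers as $X$ over a perfect field.
\item The Zariski/Fermat route stops exactly at the hard point. Nothing ensures that the normalization of $Y$ in a rational degree-$p$ purely inseparable extension of $k(Y)$ is smooth, and any resolution destroys the universal homeomorphism. Since universal homeomorphisms preserve $\ell$-adic Betti numbers, such an $X$ would be a rational surface with $b_2(X)=b_2(Y)$, which is large. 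You would then have to produce a $p$-closed foliation on it whose quotient is smooth and equal to $Y$.
\end{enumerate}
As written, the proposal is a list of unverified candidates rather than a proof.

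The missing idea is to Frobenius-twist relative to a curve instead of relative to $k$. Take a non-isotrivial smooth projective family $f\colon E\to B$ of curves of genus $g\ge 3$ over a smooth projective curve $B$ of genus $b$. Let $F_{E/B}\colon E\to E'=E\times_{B,F_B}B$ be the relative Frobenius.
\begin{enumerate}
\item Since $E'\to B$ is a base change of the smooth morphism $f$, the surface $E'$ is smooth and projective, and $F_{E/B}$ is a universal homeomorphism.
\item $F_{E/B}$ is the quotient by the smooth $p$-closed foliation $T_{E/B}$, so your foliation instinct points the right way. However, the fibration must be smooth: a nodal fibre $xy=t$ becomes $xy=t^p$ after the twist. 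Smooth genus-one fibrations over a complete curve are isotrivial, so the fibre genus must be at least two.
\item Leray, relative duality and Riemann--Roch give $\chi(\Oc_E)=(g-1)(b-1)+\deg\Ec$ with $\Ec=f_*\omega_{E/B}$. Base change gives $\Ec'\cong F_B^*\Ec$, hence $\chi(\Oc_{E'})-\chi(\Oc_E)=(p-1)\deg\Ec$.
\item Non-isotriviality and Mumford's ampleness of the Hodge class on $M_g$ give $\deg\Ec>0$. So some $h^{0,j}$ differs between $E$ and $E'$, and $\Hod(k)$ separates their classes.
\end{enumerate}
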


Finally, in \cref{app:CY4} we provide further evidence by proving \cref{introthm:Keq_have_same_invariants}, unconditionally, in the case of birational Calabi--Yau varieties of dimension at most 4.

\subsection{Declaration of AI use}
GPT 5.5-pro was used to help find a simple and general family of universal homeomorphisms whose sources and targets have different Hodge numbers. The proofs in the appendix were discovered using GPT 5.6-pro, and GPT 6-pro.

\subsection{Acknowledgements}
We would like to thank Niko Naumann for brining into our attention his previous work \cite{naumann:2007} on the Grothendieck ring of varieties in positive characteristic. This motivated us to include the result on algebraic independence of motivic classes of curves of different genus (\cref{cor:curves_algebraically_independent}).

\section{Background from motivic homotopy theory}

\subsection{Stable $\Ab^1$-homotopy theory}

Here we recall some general facts about stable $\Ab^1$-homotopy theory of Morel and Voevodsky \cite{morel:1999}. We denote by $\Ani$ the $\infty$-category of \emph{anima}, that is, spaces up to homotopy.

\begin{rec}[Stable $\Ab^1$-homotopy category]
Let $S$ be a scheme. Then the $\infty$-category of $\Ab^1$-invariant motivic spectra is the result of tensor inverting the pointed projective line $\Tb = \tfrac{\Pb^1}{\infty}$ in the $\infty$-category of $\Ab^1$-invariant Nisnevich sheaves of pointed anima,
\begin{equation}
    \SH(S) = \Pc_{\Nis,\Ab^1}(\Sm_S, \Ani_*) [\Tb^{-1}].
\end{equation}
The $\infty$-category $\SH(S)$ is stable and symmetric monoidal. The process of tensor inverting $\Tb$ can be thought as enforcing $\Pb^1$-suspension to be an equivalence of categories. In this way, the formation of $\SH(S)$ from pointed sheaves of anima is analogous to the formation of stable homotopy category from pointed anima by forcing the suspension to be an equivalence of categories.

There is a symmetric monoidal functor
\begin{align*}
    \Sm_S &\to \SH(S) \\
    X&\mapsto \Mbf^{\Ab^1}(X/S)
\end{align*}
which assigns to each smooth $S$-scheme its $\Ab^1$-invariant motivic homotopy type. Moreover, by construction $\Tb = \tfrac{\Mbf^{\Ab^1}(\Pb^1/S)}{\Mbf^{\Ab^1}(\infty/S)}$ is invertible. For $n\in\Zb$, we denote $\Sigma^n_\Tb \Mbf^{\Ab^1}(X/S) = \Tb^{\otimes n} \otimes \Mbf^{\Ab^1}(X/S)$.
\end{rec}

\begin{rec}[Bigraded cohomology groups]
Objects of $\SH(S)$ represent bigraded cohomology theories. Given $E \in \SH(S)$ and $X \in \Sm_S$, the bigraded cohomology groups are defined as
\begin{equation}
    E^{i,j}(X) := \pi_{2j-i}(\hom_{\SH(S)}(\Sigma^{-j}_\Tb \Mbf^{\Ab^1}(X/S), E))
\end{equation}
where $\hom_{\SH(S)}(\Sigma^{-j}_\Tb \Mbf^{\Ab^1}(X/S), E)$ is the mapping spectrum.
\end{rec}

\begin{rec}[Tate twists]
If $E \in \SH(S)$ is a motivic spectrum, then its \textit{Tate twist} is
\begin{equation}
    E(j) := \Sigma^j_\Tb E [-2j].
\end{equation}
The bigraded $E$-cohomology groups admit a convenient formula in terms of the Tate twist:  
\begin{align*}
    \H^i(X;E(j)) &:= \pi_{-i}(E(j)(X)) \\
    &\cong E^{i,j}(X),
\end{align*}
where the last equation follows from the adjunction $\Sigma^{-j}_\Tb \dashv \Sigma^{j}_\Tb$.
\end{rec}

\begin{rem}[Suspension isomorphism]
With the above definition, we have an isomorphism
\begin{align*}
    E^{i+2,j+1}(\Sigma_\Tb X) &= \pi_{2j-i}(\hom_{\SH(S)}(\Sigma^{-j-1}_\Tb \Sigma_\Tb \Mbf^{\Ab^1}(X/S)), E) \\ 
    &\xto{\sigma} \pi_{2j-i}(\hom_{\SH(S)}(\Sigma^{-j}_\Tb \Mbf^{\Ab^1}(X/S), E)) = 
    E^{i,j}(X),
\end{align*}
where $\sigma$ is induced by the fact that the functors $\Sigma^{-1}_\Tb$ and $\Sigma_\Tb$ are inverses of each other.
\end{rem}

\begin{ex}[Homotopy $K$-theory]
By Cisinski \cite{cisinski:2013}, there exists $\KGL^{\Ab^1} \in \SH(S)$ that represents homotopy $K$-theory \cite{weibel:1989} in the sense that for all $X \in \SH(S)$,
\begin{equation}
    (\KGL^{\Ab^1})^{i,j}(X) = \KH_{2j-i}(X).
\end{equation}
If $S$ is regular, then the homotopy $K$-theory of smooth $S$-schemes coincides with their algebraic $K$-theory. In this case, we can drop the superscript $\Ab^1$ from the notation and denote the spectrum simply as $\KGL$.
\end{ex}

\begin{ex}[$\Ab^1$-invariant motivic cohomology]
There exists $\H\Zb^{\Ab^1} \in \SH(S)$ that represents $\Ab^1$-invariant motivic cohomology in the sense that
\begin{equation}
    (\H\Zb^{\Ab^1})^{i,j}(X) = \H^i(X;\Zb(j)^{\Ab^1}),
\end{equation}
where $\Zb(j)$ are the motivic sheaves defined by Bloch, Voevodsky, Spitzweck, and others \cite{bloch:1986, spitzweck:2018, mazza-voevodsky-weibel}. See \cite{BEM} for a modern treatment. For essentially smooth varieties over Dedekind domains, $\Ab^1$-invariant motivic cohomology agrees with motivic cohomology as defined in \cite{elmanto-morrow, bouis:2024}, see \cite[Theorem~A]{bouis-kundu}. In this case, we can drop the superscript $\Ab^1$ from the notation and denote the spectrum simply as $\H \Zb$.
\end{ex}

\begin{war}
Our notation differs from much of the literature in that we decorate the $\Ab^1$-invariant spectra with a superscript $\Ab^1$. This is to differentiate them from their non-$\Ab^1$-invariant counterparts which are representable in the category $\MS(S)$ of non-$\Ab^1$-invariant motivic spectra developed in \cite{annala-iwasa:MotSp, AHI, AHI:atiyah}. The potential for confusion arises from the fully faithful embedding
\begin{equation}
    \SH(S) \subset \MS(S)
\end{equation}
that expresses $\SH(S)$ as the subcategory of $\Ab^1$-local objects in $\MS(S)$.
\end{war}

\begin{rec}[Six functor formalism]
Ayoub has constructed six functor formalism for $\SH(S)$ \cite{ayoub:thesis1, ayoub:thesis2} (see also \cite{hoyois:2017, DJK} for useful references). We recall that for a morphism $f \colon S \to T$ of schemes, there is an adjunction
\begin{equation}
    f^* \colon \SH(T) \rightleftarrows   \SH(S) \colon f_*
\end{equation}
and if $f$ is separated and of finite type, then there is also an adjunction 
\begin{equation}
    f_! \colon \SH(S) \rightleftarrows   \SH(T) \colon f^!.
\end{equation}
Moreover, each of the categories $\SH(S)$ is presentably symmetric monoidal and therefore admits also external hom objects. 

These functors satisfy the following several standard properties, including:
\begin{enumerate}
    \item The functor $f^*$ is symmetric monoidal.
    
    \item There is a natural transformation $f_! \to f_*$ which is an equivalence if $f$ is proper.
    
    \item \textbf{Projection formula:} if $f$ is separated and finite type, then there is a natural equivalence
    \begin{equation}
        A \otimes f_!(B) \xto{\sim} f_! (f^*(A) \otimes B) \in \SH(T).
    \end{equation}

    \item \textbf{Base change:} given a Cartesian square
    \begin{equation}
        \begin{tikzcd}
        T' \arrow[d]{}{g'} \arrow[r]{}{f'} & S' \arrow[d]{}{g}\\
        T \arrow[r]{}{f} & S
        \end{tikzcd}
    \end{equation}
    where $f$ is separated and finite type, then there is a natural equivalence
    \begin{equation}
        g^* f_! \xto{\sim} f'_! g'^*.
    \end{equation}

    \item If $f$ is smooth and separated, then there is a natural equivalence
    \begin{equation}\label{eq:easy_purity}
        f^! \xto{\sim} \Sigma^{T_f} f^* 
    \end{equation}
    where $\Sigma^{T_f}$ is the functor that tensors by the Thom spectrum of the relative tangent bundle $T_f$.  
\end{enumerate}

Moreover, the functors $f^*$, $f^!$ and $f_!$ admit the following geometric descriptions: 
\begin{enumerate}
    \item There is a natural isomorphism $f^* \Mbf^{\Ab^1}(X/T) = \Mbf^{\Ab^1}(X_S/S)$ for all morphisms $f \colon S \to T$.
    
    \item If $f \colon S \to T$ is smooth, then $f^*$ admits a left adjoint 
    \begin{equation}
        f_\sharp \colon \SH(S) \to \SH(T)
    \end{equation}
    that is characterized by the formula $f_\sharp \Mbf^{\Ab^1}(X/S) = \Mbf^{\Ab^1}(X/T) \in \SH(T)$.
    
    \item If $f \colon S \to T$ is smooth and separated, then the mate of \cref{eq:easy_purity} provides a natural equivalence
    \begin{equation}
        f_\sharp \Sigma^{-T_f} \xto{\sim} f_!.
    \end{equation}
    In particular, if $T_f$ is pulled back from $T$, then $f_! \Mbf^{\Ab^1}(X/S) = \Sigma^{-T_f}\Mbf^{\Ab^1}(X/T)$.
\end{enumerate}
We will freely use these descriptions in the article. 
\end{rec}

\begin{rem}[Absolute motivic spectra]
The two examples of $\Ab^1$-invariant motivic spectra, namely $\KGL^{\Ab^1}$ and $\H \Zb^{\Ab^1}$, were defined in $\SH(S)$ of all schemes $S$. In fact they are examples of \emph{absolute motivic spectra}, meaning that for general scheme $S$, the spectrum coincides with the pullback of the spectrum in $\SH(\Zb)$. In particular they are stable under pullbacks. For an absolute motivic spectrum $E$, we regularly use the formula $f^* E = E$ to simplify notation.
\end{rem}

\subsection{$\kgl$-motives}

Most cohomology theories that occur in practice are modules over the very effective $K$-theory spectrum. Here, we recall the basic facts about it.

\begin{rec}[$\Ab^1$-invariant very effective $K$-theory spectrum]
The \textit{$\Ab^1$-invariant very effective $K$-theory spectrum} is the absolute motivic spectrum given by the (very) effective cover of the algebraic $K$-theory spectrum,
\begin{equation}
    \kgl^{\Ab^1} = f_0 \KGL^{\Ab^1} \in \SH(S),
\end{equation}
where $f_0$ is the zeroth piece in the (generalized) slice filtration (see e.g. \cite{bachmann:2024}). It represents the slice filtered homotopy $K$-theory in the sense that
\begin{equation}
    (\kgl^{\Ab^1})^{i,j}(X) = \pi_{2j-i}(\Fil_{\Ab^1}^j\KH(X)).
\end{equation}
Importantly, both $\KGL^{\Ab^1}$ and $\H\Zb^{\Ab^1}$ are commutative algebras over $\kgl^{\Ab^1}$. Homotopy $K$-theory is obtained by forgetting the filtration, and $\Ab^1$-invariant motivic cohomology by passing to graded pieces \cite{BEM}.

For smooth varieties over Dedekind domains, $\Ab^1$-invariant very effective $K$-theory spectrum coincides with the very effective $K$-theory spectrum $\kgl$ defined in \cite{annala-pstragowski}.\footnote{As the analogue of (generalized) slice filtration has not been studied yet in the non-$\Ab^1$-invariant motivic homotopy category, the first author and Pstrągowski define $\kgl$ as the motivic spectrum that represents algebraic $K$-theory with its motivic filtration.} In this case, we can drop the superscript $\Ab^1$ from the notation and denote the spectrum simply as $\kgl$.
\end{rec}

\begin{defn}
The category of \textit{$\kgl$-motives} is the category
\begin{equation}
    \DM^{\Ab^1}_\kgl(S) := \Mod_{\kgl^{\Ab^1}}(\SH(S))
\end{equation}
of $\kgl^{\Ab^1}$-modules in $\SH(S)$. There is an exact symmetric monoidal left adjoint
\begin{equation}
    \kgl \otimes - \colon \SH(S) \to \DM^{\Ab^1}_\kgl(S).
\end{equation}
To simplify notation, we denote
\begin{equation}
    \Mbf^{\Ab^1}_\kgl(X/S) := \kgl \otimes \Mbf^{\Ab^1}(X/S).
\end{equation}
\end{defn}

\begin{rem}
There are two important reasons why we choose to work linearly over $\kgl$:
\begin{enumerate}
    \item The motivic spectrum $\kgl^{\Ab^1}$ is \textit{oriented}, i.e., it comes equipped with a theory of Chern classes. This implies that for every virtual vector bundle $\xi \in K_0(S)$, there is an identification $\Sigma^\xi \simeq \Sigma^r_\Tb$ where $r$ is the rank of $\xi$. 
    
    \item For smooth schemes $X$ over a Dedekind domain, $\kgl^{i,j}(X) = 0$ if $i > 2j$. In other words $\kgl$ vanishes above the Chow line. This implies the very useful fact that all cofibre sequences in perfect pure $\kgl$-motives split (see e.g. \cite[\S 2]{annala-pstragowski}).   
\end{enumerate}
\end{rem}

\begin{rem}[Six functors for $\kgl$-motives]
The categories $\DM^{\Ab^1}_\kgl(S)$ inherit a six-functor formalism from $\SH(S)$ in such a way that the functors $f^*,f_*,f_!$, and $f^!$ commute with the forgetful functors $\DM^{\Ab^1}_\kgl(S) \to \SH(S)$. To see this, observe that for every $f \colon T \to S$ and $E \in \SH(S)$, the left adjoints $f^*$ and $f_!$ are linear over $\SH(S)$. Here, $\SH(T)$ is regarded as an $\SH(S)$-linear category via $f^*$. Thus, their right adjoints are lax-linear, which implies that $f^* \dashv f_*$ and $f_! \dashv f^!$ lift to adjunctions of $\kgl$-module objects. On can then check that the lifted functors, as well as the tensor product and internal Hom of $\DM^{\Ab^1}_\kgl(S)$, satisfy the desired properties such as base-change and projection formula.
\end{rem}

\section{Resolvable motives and their Grothendieck ring}\label{sect:K0res}

Throughout the section $S$ is an affine Dedekind scheme.

\subsection{Resolvable motives}

Here we recall the definition of the key motivic ingredient of the paper, namely the concepts of perfect pure and resolvable motives. A useful analogue to keep in mind is that perfect pure motives behave like projective modules, and resolvable motives behave like perfect complexes.

\begin{defn}[Perfect pure motives]
The category of \emph{perfect pure ($\kgl$-)motives} is the smallest full subcategory
\begin{equation}
    \Pure_\kgl(S) \subset \DM^{\Ab^1}_\kgl (S)
\end{equation}
that is closed under extensions and retracts in $\DM^{\Ab^1}_\kgl (S)$ and contains $\Sigma^n_\Tb\Mbf^{\Ab^1}(X/S)$ for all smooth projective $S$-schemes $X$ and $n \in \Zb$.
\end{defn}

The following crucial result follows from the fact $\kgl$ vanishes above the Chow line for all smooth schemes over Dedekind domains.

\begin{lem}[{\cite[Lemma~2.1]{annala-pstragowski}}]
Suppose $S$ is an affine Dedekind scheme. Then any 
\begin{equation}
    A \to B \to C
\end{equation}
cofibre sequence in $\DM^{\Ab^1}_\kgl$ such that $A,B,C \in \Pure_\kgl(S)$ splits. \qed
\end{lem}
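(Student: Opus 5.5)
The plan is to show that the connecting map $\partial\colon C \to A[1]$ of the cofibre sequence vanishes. Once $\partial \simeq 0$, the sequence splits: $B \simeq A \oplus C$, and a section of $B \to C$ comes from lifting $\mathrm{id}_C$ along $B \to C$ using the long exact sequence of mapping groups. So it suffices to prove
\[
[C, A[1]]_{\DM^{\Ab^1}_\kgl(S)} = 0
\]
for all $A, C \in \Pure_\kgl(S)$. We reduce this in two steps to a cohomological computation.

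\emph{Reduction to generators.} Fix $C$. The class of $A$ with $[C,A[1]]=0$ is closed under extensions, by the long exact sequence in the second variable, and under retracts. It is therefore enough to treat $A = \Sigma^m_\Tb \Mbf^{\Ab^1}_\kgl(Y/S)$. Symmetrically, fixing $A$, the class of $C$ with $[C,A[1]]=0$ is closed under extensions and retracts. Thus we reduce to $C = \Sigma^n_\Tb\Mbf^{\Ab^1}_\kgl(X/S)$ with $X, Y$ smooth projective over $S$.

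\emph{Duality.} Let $p\colon Y \to S$ be smooth projective of relative dimension $d$; on non-equidimensional $Y$, argue componentwise. Then $\Mbf^{\Ab^1}_\kgl(Y/S) = p_\sharp \kgl = p_! \Sigma^{T_p}\kgl$. Since $p$ is proper, $p_! = p_*$, and the $\kgl$-orientation gives $\Sigma^{T_p} \simeq \Sigma^d_\Tb$, so
\[
\Mbf^{\Ab^1}_\kgl(Y/S) \simeq p_*\Sigma^d_\Tb \kgl.
\]
Using the adjunction $q_\sharp \dashv q^*$ for $q\colon X\to S$, then base change for $p_*$ (valid since $p$ is proper, so $p_*=p_!$) along the smooth map $q$, we get
\[
[\Sigma^n_\Tb \Mbf_\kgl(X), \Sigma^m_\Tb \Mbf_\kgl(Y)[1]] \cong [\Mbf_\kgl(X\times_S Y), \Sigma^{m+d-n}_\Tb\kgl[1]] = \kgl^{2j+1,j}(X\times_S Y),
\]
with $j = m+d-n$. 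Since $X \times_S Y$ is smooth over the Dedekind scheme $S$, and $\kgl^{i,j}$ vanishes for $i > 2j$ (the Chow line vanishing recalled above), this group is zero.

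The main obstacle is the duality step: correctly identifying the motive of a smooth projective scheme with its dual up to twist in $\kgl$-modules, and checking that the exponent comes out exactly $(2j+1, j)$. For example, one must verify that $\Sigma_\Tb = (-)(1)[2]$ contributes $2$ to the first index, so the shift $[1]$ lands strictly above the Chow line. The generator reduction and the splitting argument are formal.
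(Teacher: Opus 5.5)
Your proposal is correct and follows essentially the same route as the paper, which cites \cite[Lemma~2.1]{annala-pstragowski} and derives the lemma from the vanishing of $\kgl$ above the Chow line. You use that vanishing in the same way: you kill $[C,A[1]]$ by reducing to generators and then identifying the group, via Atiyah duality and base change, with $\kgl^{2j+1,j}(X\times_S Y)=0$; the d\'evissage to generators and the splitting from $\partial\simeq 0$ are both handled correctly.
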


\begin{defn}[Resolvable motives]
The category of \emph{resolvable ($\kgl$-)motives} is the smallest full subcategory
\begin{equation}
    \DM^{\res}_\kgl(S) \subset \DM^{\Ab^1}_\kgl (S)
\end{equation}
that is closed under finite limits, finite colimits, and retracts in $\DM^{\Ab^1}_\kgl (S)$, and that contains $\Pure_\kgl(S)$.
\end{defn}

\begin{ex}
If $Z \hook X$ is a closed embedding of codimension $r$ of smooth projective $S$-schemes, then we observe from the purity cofibre sequence
\begin{equation}
    \Mbf^{\Ab^1}_\kgl((X \setminus Z) /S) \to \Mbf^{\Ab^1}_\kgl(X /S) \to \Sigma^r_\Tb \Mbf^{\Ab^1}_\kgl(Z /S)  
\end{equation}
that $\Mbf^{\Ab^1}_\kgl((X \setminus Z)/S)$ is resolvable.
\end{ex}

\begin{ex}\label{ex:sncd_complement}
If $U$ is the complement of a relative snc divisor $D$ inside a smooth projective $S$-scheme $X$, then $\Mbf^{\Ab^1}_\kgl(U /S)$ is resolvable.
\end{ex}

\begin{rem}[The role of $\Ab^1$-invariance.]
We have not decorated our notation for perfect pure and resolvable motives with an $\Ab^1$. The reason for this is that the categories coincide with the categories one would obtain by replacing $\SH(S)$ with $\MS(S)$ but otherwise making the same definitions. This ultimately follows from Atiyah duality \cite{AHI:atiyah}. For further details, see \cite[\S 1]{annala-pstragowski}.
\end{rem}

\begin{rem}
If $k$ is a perfect field, then assuming strong resolution of singularities, $\Mbf^{\Ab^1}_\kgl(X/k)$ is resolvable for all smooth $k$-varieties $X$. This is because under this assumption, all smooth $k$-varieties admit an sncd compactification, so they are of the form of \cref{ex:sncd_complement}. However, it is unclear if one should expect all smooth schemes to have resolvable motives over non-perfect fields or over Dedekind domains.  Relative sncd compactifications do not exist in this generality: already over an imperfect field, there are smooth affine curves which admit no smooth compactification at all \cite[Remark~4.14]{BrionExtensions}.
\end{rem}

\subsection{Grothendieck ring of resolvable motives}

Here we study the basic properties of the Grothendieck ring of resolvable motives. As $\DM^\res_\kgl(S)$ is an idempotent complete stable infinity category, the standard formalism of algebraic $K$-theory applies to it \cite{blumberg:2013}. In particular, $\Thorn_S:=K_0(\DM^\res_\kgl(S))$ is the free abelian group generated by equivalence classes of objects of $\DM^\res_\kgl(S)$ modulo relations coming from cofibre sequences. 

The category $\Pure_\kgl(S)$ is not stable. However, it admits finite direct sums and its \emph{direct-sum Grothendieck group} $K_0^\oplus(\Pure_\kgl(S))$ is by definition the group completion of the abelian monoid whose elements are equivalence classes of objects in $\Pure_\kgl(S)$, and whose monoid operation is given by direct sums.

Both $\Thorn_S$ and $K_0^\oplus(\Pure_\kgl(S))$ admit a ring structure coming from the tensor product. There is a ring homomorphism
\begin{equation}\label{eq:pure_to_res}
    K_0^\oplus(\Pure_\kgl(S)) \to \Thorn_S
\end{equation}
that sends an equivalence class of a perfect pure motive to its equivalence class in resolvable motives. Importantly, this is an isomorphism:

\begin{prop}\label{prop:K0_of_res_motives}
If $S$ is an affine Dedekind domain, then the homomorphism of \cref{eq:pure_to_res} is an isomorphism of rings.
\end{prop}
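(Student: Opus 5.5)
The plan is to invoke Bondarko's theorem on $K_0$ of a stable $\infty$-category with a bounded weight structure: $K_0$ of the category is the direct-sum $K_0$ of the weight heart. So I will exhibit a bounded weight structure on $\DM^\res_\kgl(S)$ whose heart is $\Pure_\kgl(S)$. Compatibility with the ring structures is automatic, since the homomorphism of \cref{eq:pure_to_res} is induced by an inclusion that preserves tensor products. Only bijectivity needs proof.

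\textbf{Step 1.} I would check that $\Pure_\kgl(S)$ is a \emph{negative} subcategory of $\DM^{\Ab^1}_\kgl(S)$, i.e.\ $\pi_0\mathrm{Map}(A, B[n]) = 0$ for all $A,B \in \Pure_\kgl(S)$ and $n>0$. It suffices to treat generators $A=\Sigma^a_\Tb \Mbf^{\Ab^1}_\kgl(X/S)$ and $B = \Sigma^b_\Tb\Mbf^{\Ab^1}_\kgl(Y/S)$ with $X,Y$ smooth projective, since the vanishing is stable under extensions and retracts in each variable. Atiyah duality (the six-functor description with $f$ smooth projective, using orientation of $\kgl$ to replace $\Sigma^{T_f}$ by $\Sigma^{\dim}_\Tb$ componentwise) identifies $\mathrm{Map}(A,B[n])$ with a bigraded group $\kgl^{i,j}(X\times_S Y)$ with $i - 2j = n > 0$. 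This vanishes since $\kgl$ vanishes above the Chow line. This is the same input as the splitting lemma \cite[Lemma~2.1]{annala-pstragowski}, which in fact encodes precisely the case $n=1$.

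\textbf{Step 2.} I would apply Bondarko's theorem that a negative subcategory which is closed under finite sums and retracts (i.e.\ idempotent complete) generates, in its thick closure, a bounded weight structure whose heart is exactly that subcategory. The closure under retracts holds by definition of $\Pure_\kgl(S)$. The thick closure of $\Pure_\kgl(S)$ is exactly $\DM^\res_\kgl(S)$, because closure under finite limits, colimits and retracts in a stable category is the thick subcategory generated.

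\textbf{Step 3.} I would conclude with Bondarko's $K_0$ computation (a weight-complex argument). Surjectivity holds because every object admits a finite weight filtration with pure graded pieces. Injectivity holds because the inverse map sends $M$ to the alternating sum of the terms of its weight complex, which is well defined since weight decompositions are unique up to homotopy equivalence of weight complexes. One could also cite Sosnilo's $\infty$-categorical version.

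The main obstacle is Step 1: making the duality computation precise over a Dedekind base, including non-connected $X$ with varying relative dimension and the identification of $\mathrm{Map}$ with $\kgl$-cohomology of $X\times_S Y$. An alternative is to bootstrap from the splitting lemma itself. Any map $A \to B[1]$ has fibre an extension of $A$ by $B$, which lies in $\Pure_\kgl(S)$. So the cofibre sequence $B \to E \to A$ splits, forcing the map to be zero. Higher $n$ would follow from $B[n-1]$ being a retract of the cofibre of a map to an appropriate pure object, or by the duality argument. I would try this bootstrap first for $n=1$ and use duality for $n\ge 2$.
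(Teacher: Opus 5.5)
Your proposal is correct and is essentially the paper's proof. The paper also identifies $\Pure_\kgl(S)$ as the idempotent-complete heart of a bounded Chow weight structure on $\DM^\res_\kgl(S)$ and concludes by \cite[Theorem~5.3.1]{bondarko:2010}; the only difference is that it cites \cite[\S 2]{annala-pstragowski} for the weight structure, whereas you build it yourself from negativity of the generators (via Atiyah duality and vanishing of $\kgl$ above the Chow line) together with Bondarko's generation theorem.
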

\begin{proof}
As explained in \cite[\S 2]{annala-pstragowski}, $\Pure_\kgl(S)$ is the heart of the Chow weight structure on $\DM^\res_\kgl(S)$. As $\Pure_\kgl(S)$ is idempotent complete and the Chow weight structure is bounded, the general formalism developed by Bondarko applies. In particular, the claim follows from \cite[Theorem~5.3.1]{bondarko:2010}. 
\end{proof}

\cref{prop:K0_of_res_motives} enables one to easily construct homomorphisms out of $\Thorn_S$ using cohomology theories.

\begin{thm}\label{thm:numerical_invariants}
Let $S$ be an affine Dedekind domain, and let $E \in \DM_\kgl(S)$ be a $\kgl$-linear motivic ring spectrum. Suppose that $i,j \in \Zb$ have the property that for all smooth projective $S$-schemes $X$, the cohomology groups $E^{i+2r,j+r}(X)$ are finitely generated $E^{0,0}=E^{0,0}(S)$-modules for all $r \in \Zb$. 

Under these assumptions, there is a homomorphism of abelian groups
\begin{align*}
    E^{i,j} \colon \text{\emph{$\Thorn$}}_S \to K_0^\oplus(\Mod^\mathrm{fg}_{E^{0,0}})
\end{align*}
that is uniquely determined by the property that it sends the class of a perfect pure motive $[\Xfr]$ to $[E^{i,j}(\Xfr)]$.
\end{thm}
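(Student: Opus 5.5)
By \cref{prop:K0_of_res_motives} we have $\Thorn_S \cong K_0^\oplus(\Pure_\kgl(S))$. It therefore suffices to construct an additive map out of the direct-sum Grothendieck group of perfect pure motives. By the universal property of group completion, this amounts to a monoid homomorphism from (equivalence classes of objects of $\Pure_\kgl(S)$, $\oplus$) to $K_0^\oplus(\Mod^\mathrm{fg}_{E^{0,0}})$. Uniqueness is then automatic, since classes of pure motives generate $K_0^\oplus(\Pure_\kgl(S))$ as a group. The candidate map is $\Xfr \mapsto [E^{i,j}(\Xfr)]$, where for a $\kgl$-module $\Xfr$ we set $E^{i,j}(\Xfr) := \pi_{2j-i}\hom_{\DM_\kgl}(\Sigma^{-j}_\Tb \Xfr, E)$, computed in $\kgl$-modules. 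For $\Xfr = \Mbf^{\Ab^1}_\kgl(X/S)$ this agrees with the earlier definition by the free–forgetful adjunction. This group is a module over $E^{0,0}(S)$ via the ring structure of $E$ and the unit/structure map. The assignment is well defined on equivalence classes, and it is additive on direct sums because $\hom(-,E)$ takes finite direct sums to products.

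The only real content is that $E^{i,j}(\Xfr)$ is finitely generated over $E^{0,0}$ for every $\Xfr \in \Pure_\kgl(S)$. I would show that the full subcategory of $\Pure_\kgl(S)$ on which this holds contains the generators and is closed under extensions and retracts.

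\emph{Generators.} For $\Xfr = \Sigma^n_\Tb \Mbf^{\Ab^1}_\kgl(X/S)$, the suspension isomorphism gives $E^{i,j}(\Sigma^n_\Tb X) \cong E^{i-2n,j-n}(X)$. This is finitely generated by hypothesis with $r=-n$, and this is exactly why the hypothesis is imposed for all $r$.

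\emph{Retracts.} A retract of a finitely generated module is a quotient of it, hence finitely generated.

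\emph{Extensions.} Here I would use \cite[Lemma~2.1]{annala-pstragowski}. The lemma is stated for cofibre sequences with all three terms pure, whereas closure under extensions a priori only knows the outer terms are pure. To avoid this issue, I would first check that any cofibre sequence $A \to B \to C$ in which $A$ and $C$ are perfect pure and $B$ is an extension of them splits. The splitting is governed by the connecting map $C \to \Sigma A$. This map vanishes by the same Chow-line vanishing argument that underlies Lemma~2.1: $\pi_0\hom(C,\Sigma A)=0$ for $C, A$ perfect pure, since the statement reduces by d\'evissage to generators, and for generators it follows from Atiyah duality together with $\kgl^{a,b}=0$ for $a>2b$. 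Concretely, one first proves this vanishing for $A, C$ in the class generated by twisted smooth projective motives under extensions and retracts, inductively. Hence every object of $\Pure_\kgl(S)$ is a retract of a finite direct sum of generators $\Sigma^n_\Tb\Mbf^{\Ab^1}_\kgl(X/S)$. Alternatively, one can simply note that this description is the one given in \cite[\S 2]{annala-pstragowski}. Once it is established, finite generation follows from the two previous cases, because $E^{i,j}$ turns finite sums into finite direct sums.

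I expect this last reduction, that every perfect pure motive is a retract of a finite sum of generators, to be the only step requiring care. Everything else is formal: additivity, compatibility with the isomorphism of \cref{prop:K0_of_res_motives}, and uniqueness.
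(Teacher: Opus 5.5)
Your proposal is correct and follows the paper's own route. Like the paper, you pass through $\Thorn_S \cong K_0^\oplus(\Pure_\kgl(S))$, obtain finite generation on $\Sigma^n_\Tb \Mbf^{\Ab^1}_\kgl(X/S)$ from the hypothesis with $r=-n$, propagate it to all perfect pure motives, and conclude by additivity under direct sums. The one difference is that you spell out a step the paper leaves implicit, namely that splitting of extensions makes every perfect pure motive a retract of a finite sum of twisted generators. Your worry about the hypotheses of \cite[Lemma~2.1]{annala-pstragowski} is unnecessary, though: in a cofibre sequence with pure outer terms, the middle term is already pure because $\Pure_\kgl(S)$ is closed under extensions. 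So the lemma applies directly, and you can skip re-deriving the splitting from Chow-line vanishing.
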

\begin{proof}
Under these assumptions, the groups $E^{i,j}(\Sigma^r_\Tb(X))$ are finitely generated $E^{0,0}$-modules for all $r \in \Zb$ and all smooth projective $S$-schemes $X$. As finite generation is conserved under retracts, $E^{i,j}(\Xfr)$ are finitely generated $E^{0,0}$-modules for all perfect pure motives $\Xfr$. Then since $[\Xfr] \mapsto E^{i,j}(\Xfr)$ is additive under direct sums, it induces a homomorphism
\begin{equation}
   \Thorn_S \cong K_0(\Pure_\kgl(S)) \to K_0^\oplus(\Mod^\mathrm{fg}_{E^{0,0}})
\end{equation}
as desired.
\end{proof}

\begin{defn}\label{defn:fin_gen_cohom_gps}
Let $S$ be an affine Dedekind domain. A $\kgl$-linear motivic ring spectrum $E \in \DM_\kgl(S)$ which satisfies the hypotheses of \cref{thm:numerical_invariants} for all $i,j\in \mathbb{Z}$ is said to have \emph{finitely generated cohomology groups (on smooth projective varieties)}.
\end{defn}

\begin{ex}[\cref{thm:A}]\label{ex:basic_coh_examples}
Let us illustrate the above general theorem with the following concrete examples over a perfect field $k$. All of these examples use the $\kgl$-linear version of the $\Ab^1$-colocalization functor introduced in \cite{AHI:atiyah}. The computations below use the fact that $\Ab^1$-colocalization does not change the values of the cohomology theory on smooth projective schemes \cite[Proposition~6.21]{AHI:atiyah}.
\begin{enumerate}
    \item The $\Ab^1$-colocalized de Rham cohomology is represented by $\dR(k)^\dagger \in \DM^{\Ab^1}_\kgl(k)$. For a smooth projective $k$-variety $X$, 
    \begin{equation}
        \H^i(X ; \dR(k)^\dagger(j)) = \H^{i}(X;\Omega_{X/k}^\bullet).
    \end{equation}
    Thus, the map $\dR^i \colon \Thorn_S \to K_0^\oplus(\Mod^\mathrm{fg}_{k}) \cong \Zb$ sends the motive of a smooth projective variety to its $i$th de Rham number.
    
    \item The $\Ab^1$-colocalized Hodge cohomology is represented by $\Hod(k)^\dagger \in \DM^{\Ab^1}_\kgl(k)$. For a smooth projective $k$-variety $X$, 
    \begin{equation}
        \H^i(X ; \Hod(k)^\dagger(j)) = \H^{i}(X;\Omega_{X/k}^j[-j]).
    \end{equation}
    Altering the notation for convenience, we define $\Hod^{i,j} \colon \Thorn_S \to \Zb$ to be the map that sends the class of a smooth projective variety to its $(i,j)$-th Hodge number.
    
    \item The $\Ab^1$-colocalized crystalline cohomology is represented by $\H W(k)^{\crys, \dagger}$. For a smooth projective $k$-variety $X$, 
    \begin{equation}
        \H^i(X ; \H W(k)^{\crys, \dagger}(j)) = \H^{i}_\crys(X/W(k)).
    \end{equation}
    The ring $W(k)$ of $p$-typical Witt vectors is a discrete valuation ring with uniformizer $\varpi$. By the classification of finitely generated modules over discrete valuation rings $K_0^\oplus(\Mod^\mathrm{fg}_{k})$ is isomorphic to the monoid ring $\Zb[(\Zb_{>0} \cup \{\infty\},{\min})]$.

    Thus, the induced map $\H W(k)^{i} \colon \Thorn_S \to K_0^\oplus(\Mod^\mathrm{fg}_{W(k)})$ sends the class of a smooth projective to a tuple of integers that encode the isomorphism type of its $i$-th crystalline cohomology.
    
\end{enumerate}    
\end{ex}

The abelian group homomorphism of \cref{thm:numerical_invariants} has a simple description in terms of weight structures introduced by Bondarko \cite{bondarko:2010}. This theory was studied in the at-the-characteristic context by Annala--Pstragowski \cite{annala-pstragowski}. Away from the characteristic, similar formulas have appeared in the work of Naumann \cite{naumann:2007}.

\begin{prop}
Let $S$ and $E$ be as in \cref{thm:numerical_invariants}. Then, the class assigned by $E^{i,j}$ to a resolvable motive $\Xfr$, in the \emph{usual} Grothendieck group $K_0(\Mod^\mathrm{fg}_{E^{0,0}})$, is the Euler characteristic of the $i$th graded piece of $E(j)(\Xfr)$ in the weight filtration:
\begin{equation}\label{eq:weight_formula}
    E^{i,j}([\Xfr]) = \sum_{l} (-1)^{i+l} \cdot    [\H^l(\Xfr ; \Gr^W_{i} E(j))] \in K_0(\Mod^\mathrm{fg}_{E^{0,0}}).
\end{equation}
\end{prop}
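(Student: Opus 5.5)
Both sides of \cref{eq:weight_formula} are additive in $\Xfr$, so it suffices to compare them on pure motives and then check compatibility with the passage from $K_0^\oplus(\Pure_\kgl(S))$ to $\Thorn_S$ provided by \cref{prop:K0_of_res_motives}.

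The first step is to make the weight filtration precise. Using the bounded Chow weight structure on $\DM^\res_\kgl(S)$ with heart $\Pure_\kgl(S)$, every resolvable $\Xfr$ has a weight complex: a finite filtration (a weight Postnikov tower) whose graded pieces are of the form $P_l[l]$ with $P_l \in \Pure_\kgl(S)$. Applying the cohomological functor $\H^*(-;E(j))$ gives a spectral sequence. Its $E_1$ page is $E_1^{l,m} = E^{m,j}(P_l)$, and it converges to $\H^{l+m}(\Xfr;E(j))$. I will define $\H^l(\Xfr;\Gr^W_i E(j))$ as the cohomology of the complex $l \mapsto E^{i,j}(P_l)$ with differential induced by the weight complex. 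This is the $E_2$ term of the weight spectral sequence, which is well defined independently of the choice of tower by Bondarko's weak functoriality of weight complexes up to homotopy.

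Next I compute the right-hand side. The modules $E^{i,j}(P_l)$ are finitely generated by the hypothesis of \cref{thm:numerical_invariants}, the same way they were shown to be in its proof. Only finitely many $P_l$ are nonzero. Since the Euler characteristic of a bounded complex of finitely generated modules equals the alternating sum of its terms in $K_0(\Mod^\mathrm{fg}_{E^{0,0}})$, the right-hand side equals
\[
\sum_l (-1)^{i+l}[E^{i,j}(P_l)].
\]
The sign convention depends on how the shift $P_l[l]$ is normalized, and I will fix it so that $\Xfr=P$ pure (concentrated in the single position matching $(-1)^{i+l}=1$) gives $[E^{i,j}(P)]$.

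For the left-hand side, the weight Postnikov tower gives the identity
\[
[\Xfr] = \sum_l (-1)^l [P_l] \in K_0(\DM^\res_\kgl(S)),
\]
and this is precisely the identity used in Bondarko's isomorphism \cite[Theorem~5.3.1]{bondarko:2010}. Applying $E^{i,j}$, which by \cref{thm:numerical_invariants} is additive and agrees with $[E^{i,j}(-)]$ on pure objects, yields $\sum_l(-1)^l[E^{i,j}(P_l)]$, after pushing forward along $K_0^\oplus \to K_0$. Matching indices, with the same normalization of signs, finishes the argument.

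The main obstacle is bookkeeping rather than substance. One must set up the indexing of $\Gr^W_i$ and of the shifts so that the signs $(-1)^{i+l}$ come out correctly. One must also confirm that the complex $E^{i,j}(P_\bullet)$ really computes the graded piece of the weight filtration, i.e.\ that its cohomology is independent of the choice of weight complex up to homotopy. I would address the latter by citing Bondarko's result that the weight complex functor lands in $K^b(\Pure_\kgl(S))$, and that additive functors preserve chain homotopy equivalences.
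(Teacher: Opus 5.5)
Your argument is correct in substance but organized differently from the paper's, and the sign bookkeeping you defer needs one concrete fix.

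\textbf{Comparison with the paper.} The paper never chooses a weight tower. It uses only that $\Gr^W_iE(j)\colon\DM^\res_\kgl(S)\to\Sp$ is an exact functor with $\Gr^W_iE(j)(P)\simeq E^{i,j}(P)[-i]$ for pure $P$. Exactness turns every cofibre sequence into a long exact sequence of $E^{0,0}$-modules, so the right-hand side is additive in the ordinary $K_0$. The pure case is then immediate, and the two homomorphisms agree because pure classes generate $\Thorn_S$ by \cref{prop:K0_of_res_motives}.

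This is exactly the reduction stated in your first sentence. Once you justify additivity of the right-hand side this way, you are done. The weight complex, its independence of choices up to homotopy, and the spectral-sequence identification are then unnecessary. Your route instead computes both sides explicitly on a weight Postnikov tower of $\Xfr$. What it buys is a concrete description of $\H^l(\Xfr;\Gr^W_iE(j))$ as cohomology of the bounded complex $E^{i,j}(P_\bullet)$, i.e.\ the $E_2$-page of the weight spectral sequence. The cost is relying on Bondarko's weight-complex machinery and on identifying the graded pieces of the weight filtration with that complex.

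\textbf{The sign fix.} Take graded pieces $P_l[l]$ and the paper's normalization. Exactness gives $\Gr^W_iE(j)(P_l[l])\simeq E^{i,j}(P_l)[-i-l]$, so
\[
\H^l(\Xfr;\Gr^W_iE(j))\cong H^{l-i}\bigl(E^{i,j}(P_\bullet)\bigr),
\]
not $H^l$. As written, your right-hand side $\sum_l(-1)^{i+l}[E^{i,j}(P_l)]$ differs from your left-hand side $\sum_l(-1)^l[E^{i,j}(P_l)]$ by the global sign $(-1)^i$. This cannot be absorbed by renormalizing the tower, since the tower does not depend on $i$. The shift by $i$ belongs in the definition of $\H^l(\Xfr;\Gr^W_iE(j))$. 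With it, substituting $m=l-i$ gives
\[
\sum_l(-1)^{i+l}[H^{l-i}]=\sum_m(-1)^m[E^{i,j}(P_m)],
\]
and the two sides match.
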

\begin{proof}
It suffices to check that (1) the right hand side of \cref{eq:weight_formula} is additive in cofibre sequences in $\DM^\res_\kgl(S)$, and (2) that \cref{eq:weight_formula} holds for perfect pure motives.

\begin{enumerate}
    \item By definition, $\Gr^W_i E(j) \colon \DM^\res_\kgl(S) \to \Sp$ is an exact functor, so it sends cofibre sequences to cofibre sequences. The induced long exact sequence on homotopy groups is $E^{0,0}$-linear, and proves the additivity in the usual Grothendieck group. We point out, that this does not prove the additivity in the direct-sum Grothendieck group.
    
    \item By definition, if $\Xfr$ is perfect pure, then $\Gr^W_i E(j)(\Xfr) \simeq E^{i,j}(\Xfr)[-i]$, from which the claim follows. \qedhere
\end{enumerate}
\end{proof}

\subsection{Grothendieck ring of resolute schemes}

Here we relate the Grothendieck ring of resolvable motives to a modification of the Grothendieck ring of schemes.

\begin{defn}
A finite type separated $S$-scheme $X$ is \emph{($\kgl$-)resolute} if its \textit{Borel--Moore motive} 
\begin{equation}
    \Mbf^{\Ab^1}_{\kgl}(X/S)_\BM := \pi_!(\Mbf^{\Ab^1}_\kgl(X/X)) \in \DM^{\Ab^1}_\kgl(S)
\end{equation}
is resolvable. Here $\pi \colon X \to S$ is the structure morphism. As resolvable motives are stable under taking duals, also the \emph{compactly supported motive} 
\begin{equation}
    \Mbf^{\Ab^1}_{\kgl}(X/S)_c := \Mbf^{\Ab^1}_{\kgl}(X/S)^\vee_\BM \in \DM^{\Ab^1}_\kgl(S)
\end{equation}
is resolvable.

The above terminology stems from the fact that evaluating $\Mbf^{\Ab^1}_{\kgl}(X/S)_\BM$ and $\Mbf^{\Ab^1}_{\kgl}(X/S)_c$ on a motivic spectrum $E$ gives the $E$-Borel--Moore homology and the $E$-compactly supported cohomology groups of $X$ over $S$, respectively.
\end{defn}

\begin{ex}[Compactly supported motives of smooth projective varieties]
Suppose that $\pi \colon X \to S$ is smooth, projective, and of relative dimension $n$. By Atiyah duality, $\Mbf^{\Ab^1}_{\kgl}(X/S)^\vee \simeq \Sigma^{-n}_\Tb \Mbf^{\Ab^1}_{\kgl}(X/S)$
and therefore 
\begin{equation}
    \Mbf^{\Ab^1}_{\kgl}(X/S)_c \simeq \Mbf^{\Ab^1}_\kgl(X/S).
\end{equation}
Thus, the compactly supported and ordinary $E$-cohomologies of $X$ coincide.
\end{ex}

\begin{rec}[Localization]\label{rec:localization}
The main reason why we use Borel--Moore and compactly supported motives is that if $Z \hook X$ is a closed embedding of $S$-schemes with open complement $U$, then there is a cofibre sequence
\begin{equation}\label{eq:localization_BM}
    \Mbf^{\Ab^1}_{\kgl}(U/S)_\BM \to \Mbf^{\Ab^1}_{\kgl}(X/S)_\BM \to \Mbf^{\Ab^1}_{\kgl}(Z/S)_\BM,
\end{equation}
see e.g. \cite[Theorem~1.1(3)]{hoyois:2017}. By taking duals, we obtain a cofibre sequence
\begin{equation}\label{eq:localization_c}
    \Mbf^{\Ab^1}_{\kgl}(Z/S)_c \to \Mbf^{\Ab^1}_{\kgl}(X/S)_c \to \Mbf^{\Ab^1}_{\kgl}(U/S)_c,
\end{equation}
for the compactly supported motives.
\end{rec}

\begin{lem}[Basic properties of resolute schemes]\label{res_prop}\

\begin{enumerate}
    \item Smooth projective $S$-schemes are resolute.
    \item If $Z \hook X$ is a closed embedding of $S$-schemes with an open complement $U$, and two of the $Z,X,U$ are resolute, then so is the third.
    \item If $X$ and $Y$ are resolute, then so is their product $X \times_S Y$.
\end{enumerate}
\end{lem}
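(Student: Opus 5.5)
The plan is to treat each part in turn, working throughout with the Borel--Moore motive $\Mbf^{\Ab^1}_{\kgl}(X/S)_\BM$. Resolvability is checked through the closure properties of $\DM^\res_\kgl(S)$: it is stable under finite (co)limits, retracts, Tate twists $\Sigma^n_\Tb$, and (as noted in the definition of resolute schemes) duals.

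For (1), let $\pi\colon X\to S$ be smooth projective of relative dimension $n$. Since $\pi$ is smooth and separated, the recollected formula gives
\[
\pi_! \simeq \pi_\sharp \Sigma^{-T_\pi}.
\]
Because $\kgl$ is oriented, $\Sigma^{-T_\pi}$ acts on $\kgl$-modules as $\Sigma^{-n}_\Tb$. Hence
\[
\Mbf^{\Ab^1}_{\kgl}(X/S)_\BM \simeq \Sigma^{-n}_\Tb \Mbf^{\Ab^1}_\kgl(X/S),
\]
which lies in $\Pure_\kgl(S)$ by definition and is therefore resolvable. Equivalently, one can invoke the example on compactly supported motives of smooth projective schemes together with stability of resolvable motives under duals.

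For (2), I will use the localization cofibre sequence \cref{eq:localization_BM},
\[
\Mbf^{\Ab^1}_{\kgl}(U/S)_\BM \to \Mbf^{\Ab^1}_{\kgl}(X/S)_\BM \to \Mbf^{\Ab^1}_{\kgl}(Z/S)_\BM.
\]
In a stable $\infty$-category, if two terms of a cofibre sequence lie in a full subcategory closed under finite limits and colimits, so does the third. This holds because each term is the fibre or cofibre of the map between the other two, possibly after a shift. Shifts are themselves finite (co)limits (suspension is a pushout, loops is a pullback), so the subcategory is closed under them. This settles all three cases.

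For (3), with $p\colon X\to S$ and $q\colon Y\to S$, I want a Künneth equivalence
\[
\Mbf^{\Ab^1}_\kgl(X\times_S Y/S)_\BM \simeq \Mbf^{\Ab^1}_\kgl(X/S)_\BM\otimes \Mbf^{\Ab^1}_\kgl(Y/S)_\BM.
\]
The route is formal and runs through the compactly supported version, which is cleaner: $\Mbf_c$ is the dual of $\Mbf_\BM$, and $\Mbf_c(X/S)$ should be $p_*p^*\mathbb 1$ for $X$ proper, in general $p_!p^*\mathbb 1$ up to the standard identification.

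Denote by $p'\colon X\times_S Y\to Y$ and $q'\colon X\times_S Y\to X$ the projections. The key computation uses only projection formula and base change:
\[
p_!\mathbb 1\otimes q_!\mathbb 1 \simeq q_!(q^*p_!\mathbb 1) \simeq q_! p'_!\mathbb 1 = (pq')_!\mathbb 1.
\]
The first step is the projection formula and the second is base change; this is exactly the Borel--Moore object $\pi_!\Mbf(X/X)$ for the product, since $\Mbf(X/X)=\mathbb 1$. So the Borel--Moore motive of the product is the tensor product of the Borel--Moore motives. It then suffices that $\DM^\res_\kgl(S)$ is closed under $\otimes$. That holds because $\Pure_\kgl(S)$ is: a product of smooth projective schemes is smooth projective, and $\otimes$ preserves extensions and retracts in each variable. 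Closure then passes to finite (co)limits and retracts in each variable by exactness of $\otimes$, via the usual two-step generation argument.

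The main obstacle is this last closure-under-tensor step. It needs a small induction: fix a pure object and show the class of $B$ with $A\otimes B$ resolvable is closed under finite (co)limits and retracts, then swap the roles of the two variables.
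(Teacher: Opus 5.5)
Your proposal is correct and follows the paper's proof. Part (1) goes via purity/Atiyah duality, part (2) via the localization cofibre sequence, and part (3) via the K\"unneth formula for Borel--Moore motives, which you usefully derive from the projection formula and base change, together with the closure of $\DM^\res_\kgl(S)$ under $\otimes$ that the paper leaves implicit. One harmless slip in an aside: with the paper's conventions, $p_!p^*$ of the unit (which equals $p_*p^*$ of the unit for proper $p$) is the Borel--Moore motive, and $\Mbf^{\Ab^1}_\kgl(X/S)_c$ is its dual. Your actual computation is for the correct object, so nothing is affected.
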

\begin{proof}
The first property is obvious by definition and the second follows immediately from the cofibre sequence of \cref{eq:localization_BM}. The third property follows from the Künneth formula $\Mbf^{\Ab^1}_{\kgl}(X \times_S Y /S)_\BM = \Mbf^{\Ab^1}_{\kgl}(X/S)_\BM \otimes \Mbf^{\Ab^1}_{\kgl}(Y/S)_\BM$ which follows from the six functors formalism. 
\end{proof}

\begin{ex}\label{ex:sncd-res}
Any simple normal crossings divisor $D$ in a smooth projective variety $X$ is resolute. This can be seen by induction on the number of components of $D$ and appealing to parts $(1)$ and $(2)$ of \cref{res_prop}.
\end{ex}

\begin{defn}
We define the \textit{Grothendieck ring of resolute varieties}, $K_0(\Var^\res_S)$, as the abelian group generated by isomorphism classes of resolute $S$-schemes modulo the relations $[X] = [Z] + [X \setminus Z]$ coming from closed embeddings $Z \hook X$ of resolute $S$-schemes. We refer to these relations as \emph{resolute scissor relations}. The group $K_0(\Var^\res_S)$ has a ring structure given by fibre product over $S$.
\end{defn}

The Grothendieck ring of resolute of varieties is a modification of the usual Grothendieck ring of varieties that has fewer generators and relations. It is not clear if the map $K_0(\Var^\res_S) \to K_0(\Var_S)$ has any good surjectivity or injectivity properties. However, we make the following conjecture. 

\begin{conj}\label{conj:everything_is_resolute}
If $k$ is a perfect field then all finite-type $k$-schemes are resolute. In particular, the map $K_0(\Var^\res_k) \to K_0(\Var_k)$ is an isomorphism.
\end{conj}

The above conjecture is implied by resolution of singularities. The argument is essentially due to Riou \cite{riou:2005}. The reason for restricting to schemes over perfect fields is that over imperfect fields, resolution of singularities turns a singular variety into a regular one, but being resolute means being built from smooth (projective) varieties as motives, i.e., there is a distinction between smooth and regular here.

\begin{prop}\label{prop:ros_implies_res}
If $k$ is a perfect field over which weak resolution of singularities by projective modifications holds, then every finite-type $k$-scheme is resolute.
\end{prop}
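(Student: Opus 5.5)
We argue by Noetherian induction on $X$, using \cref{res_prop}(2) to cut $X$ into pieces. Since the Borel--Moore motive depends only on the underlying reduced scheme (localization \cref{eq:localization_BM} with $U=\emptyset$ applied to $X_\red \hook X$), we may assume $X$ reduced. By \cref{res_prop}(2), we may also remove any proper closed subset that is resolute, or show that a dense open is resolute while its complement is resolute by induction. So it suffices to show: every reduced $X$ of dimension $d$ contains a dense open $U$ which is resolute, assuming all schemes of dimension $<d$ are resolute. The statement is empty for $X=\emptyset$, and in dimension $0$ the scheme is a finite disjoint union of spectra of finite, hence separable, extensions of the perfect field $k$, which are smooth projective.

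Next we produce the good open. Shrinking $X$, we may assume it is irreducible, since distinct components meet in lower dimension. We may further assume $X$ is quasi-projective, even projective: replace $X$ by a projective compactification $\bar X$ (Nagata plus Chow's lemma, or simply take an affine open and its projective closure). The boundary has smaller dimension and is handled by induction; for the affine open we use \cref{res_prop}(2) in the reverse direction.

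Now apply weak resolution to the projective reduced $\bar X$: we get a projective birational $f\colon X'\to\bar X$ with $X'$ smooth, and hence smooth projective over $k$. By \cref{res_prop}(1), $X'$ is resolute. Choose a dense open $V\subset \bar X$ over which $f$ is an isomorphism. Then $X'\setminus f^{-1}(V)$ is a closed subscheme of dimension $<d$, so it is resolute by induction. By \cref{res_prop}(2), $f^{-1}(V)\cong V$ is resolute. Since $\bar X\setminus V$ has dimension $<d$ and is resolute by induction, \cref{res_prop}(2) shows that $\bar X$ is resolute. Finally, $X=\bar X\setminus(\bar X\setminus X)$, and the removed boundary has dimension $<d$, so $X$ is resolute.

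The delicate points are bookkeeping. First, the induction should run on dimension, over all finite-type $k$-schemes of smaller dimension, so that the exceptional locus of $f$ (which lives in $X'$, not in $X$) is covered. Second, we need invariance of $\Mbf^{\Ab^1}_\kgl(-/k)_\BM$ under passing to the reduction, which follows from localization since the complement is empty. Perfectness of $k$ enters only in the dimension-$0$ case, and in identifying "smooth" with the output of resolution. I expect the main subtlety to be ensuring that $X'$ is genuinely smooth projective over $k$, not merely regular; the hypothesis as stated provides this.
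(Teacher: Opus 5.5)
Your proof is correct and follows the paper's argument: induct on dimension, and use the localization sequence (\cref{res_prop}(2)) to reduce to showing that an integral $X$ has a dense open isomorphic to a dense open of a smooth projective variety, which you obtain by compactifying, making the compactification projective, and applying weak resolution. Taking the projective closure of an affine open, instead of using Nagata compactification followed by Chow's lemma as the paper does, is a harmless simplification.
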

\begin{proof}
Recall that weak resolution of singularities by projective modifications means that for every reduced $k$-scheme $X$, there exists a projective birational morphism $X' \to X$ from a smooth $k$-variety $X'$. Indeed, if this version of resolution of singularities holds then one proves the resolvability of $\Mbf^{\Ab^1}_\kgl(X/k)$ by induction on dimension as follows:
\begin{enumerate}
    \item As $\Mbf^{\Ab^1}_\kgl(X /k) = \Mbf^{\Ab^1}_\kgl(X_\red /k)$, motives of zero-dimensional $k$-varieties are sums of motives of finite separable field extensions over $k$. These are smooth and projective and therefore resolute.

    \item If resoluteness of all $k$-schemes of dimension at most $n-1$ is known, then, using the localization sequences \cref{rec:localization}, renders resoluteness in dimension $n$ a birational question: to show that $X$ is resolute, it suffices to show that it is birational to a smooth projective $k$-variety. This can be done by first compactifying $X$ with a proper variety, using Chow's lemma to obtain a projective model, and finally resolving singularities to obtain a smooth projective model. \qedhere
\end{enumerate}
\end{proof}

Next, we relate the Grothendieck ring of resolute of varieties to the Grothendieck ring of resolvable motives. This allows us to interpret cohomological invariants like the ones considered in \cref{ex:basic_coh_examples} as group homomorphisms out of $K_0(\Var^\res_S)$.

\begin{prop}\label{prop:K0resolute_maps_to_K0resolvable}
The assignment $[X] \mapsto [\Mbf^{\Ab^1}_\kgl(X/S)_c]$ defines a ring homomorphism
\begin{equation}
    K_0(\Var^\res_S) \to \text{\emph{$\Thorn$}}_S.
\end{equation}
\end{prop}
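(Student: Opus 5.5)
We have to check three things: the assignment lands in $\Thorn_S$, it respects the resolute scissor relations (so it is well defined as a group homomorphism), and it is multiplicative and unital.

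\emph{Well-definedness.} For a resolute $S$-scheme $X$, the Borel--Moore motive $\Mbf^{\Ab^1}_\kgl(X/S)_\BM$ is resolvable by definition. Resolvable motives are stable under duals, so $\Mbf^{\Ab^1}_\kgl(X/S)_c$ is also resolvable and defines a class in $\Thorn_S = K_0(\DM^\res_\kgl(S))$. This class depends only on the isomorphism class of $X$ over $S$, by functoriality of $\pi_!$.

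\emph{Scissor relations.} Let $Z \hook X$ be a closed embedding of resolute $S$-schemes with complement $U$. By \cref{res_prop}(2), $U$ is resolute, so the resolute scissor relation makes sense. \cref{rec:localization} gives the cofibre sequence \cref{eq:localization_c}
\[
\Mbf^{\Ab^1}_\kgl(Z/S)_c \to \Mbf^{\Ab^1}_\kgl(X/S)_c \to \Mbf^{\Ab^1}_\kgl(U/S)_c .
\]
All three terms lie in $\DM^\res_\kgl(S)$, and that subcategory is full and closed under finite (co)limits. Hence this is a cofibre sequence in $\DM^\res_\kgl(S)$, and in $\Thorn_S$ we get $[X]_c = [Z]_c + [U]_c$. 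So the assignment descends to a homomorphism of abelian groups $K_0(\Var^\res_S) \to \Thorn_S$.

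\emph{Multiplicativity and unit.} The unit of $K_0(\Var^\res_S)$ is $[S]$, and $\Mbf^{\Ab^1}_\kgl(S/S)_c$ is the unit $\kgl$, since $\Id_!$ is the identity and the unit is self-dual. For products, the proof of \cref{res_prop}(3) gives the Künneth equivalence
\[
\Mbf^{\Ab^1}_\kgl(X\times_S Y/S)_\BM \simeq \Mbf^{\Ab^1}_\kgl(X/S)_\BM \otimes \Mbf^{\Ab^1}_\kgl(Y/S)_\BM .
\]
This comes from the six-functor formalism: apply base change and the projection formula to the factorization $X\times_S Y \to Y \to S$, and use that $\kgl$ is absolute, so its pullback to $X\times_S Y$ is the exterior product of the pullbacks. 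Dualizing this equivalence requires $(A\otimes B)^\vee \simeq A^\vee \otimes B^\vee$. This holds for dualizable objects, and resolvable motives are dualizable, being finitely built from the dualizable $\Sigma^n_\Tb \Mbf^{\Ab^1}_\kgl(X/S)$ with $X$ smooth projective (Atiyah duality). Therefore $[X\times_S Y]_c = [X]_c \cdot [Y]_c$ in $\Thorn_S$, where the ring structure on $\Thorn_S$ is induced by the tensor product.

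\emph{Main obstacle.} The only nonformal input is this Künneth formula for Borel--Moore motives together with its compatibility with duality; everything else is formal. The Künneth formula itself is the one already invoked in \cref{res_prop}, so the remaining point to verify is just that dualization is monoidal on resolvable objects.
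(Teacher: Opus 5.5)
Your proof is correct and follows the same route as the paper. The paper's entire proof is your scissor-relation step: the localization cofibre sequence \cref{eq:localization_c} gives additivity in $\Thorn_S$. Your multiplicativity check (Künneth for $\pi_!$ via base change and the projection formula, then $(A\otimes B)^\vee\simeq A^\vee\otimes B^\vee$ for dualizable resolvable objects) is what the paper leaves implicit behind \cref{res_prop}(3), and it is sound.
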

\begin{proof}
    The well-definedness follows from the cofibre sequence of \cref{eq:localization_c}.
\end{proof}

Assuming \cref{conj:everything_is_resolute}, there is a realization homomorphism $K_0(\Var_k) \to \text{\emph{$\Thorn$}}_k$ which induces a motivic measure that sends smooth projective varieties to their Hodge polynomial (\cref{thm:A}). The resulting theorem allows us to answer a question from the end of \cite{naumann:2007}, asking if the motivic classes of $\Pb^1$ and a supersingular elliptic curve are algebraically independent.

\begin{thm}
    Let $k$ be a perfect field over which all finite-type $k$-schemes are resolute and let $X$ and $Y$ be smooth projective varieties whose Hodge polynomials are algebraically independent from each other. Then,  $[X]$ and $[Y]$ are algebraically independent in $K_0(\Var_k)$. \qed
\end{thm}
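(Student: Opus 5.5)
The strategy is to push the question into a polynomial ring along a ring homomorphism, where algebraic independence can be read off directly. Since every finite-type $k$-scheme is resolute, the map $K_0(\Var^\res_k) \to K_0(\Var_k)$ is an isomorphism. This holds because the generators coincide, and every scissor relation $[X]=[Z]+[X\setminus Z]$ in $K_0(\Var_k)$ is a resolute scissor relation once all schemes are resolute. Composing its inverse with the ring homomorphism of \cref{prop:K0resolute_maps_to_K0resolvable} and then with the ring homomorphism $\Hod(k)$ of \cref{thm:A} gives a ring homomorphism
\begin{equation*}
    \phi \colon K_0(\Var_k) \to \Thorn_k \to \Zb[x,y][\nicefrac{1}{xy}].
\end{equation*}

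Next I identify $\phi$ on the two classes in question. For smooth projective $X$, we have $\Mbf^{\Ab^1}_\kgl(X/k)_c \simeq \Mbf^{\Ab^1}_\kgl(X/k)$ by the example on compactly supported motives of smooth projective varieties. This object is perfect pure, so by \cref{thm:A} (built via \cref{thm:numerical_invariants} and \cref{ex:basic_coh_examples}) its image under $\Hod(k)$ is the Hodge polynomial $h_X$. Hence $\phi([X]) = h_X$ and $\phi([Y]) = h_Y$.

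The algebraic independence then pulls back. Suppose $P \in \Zb[s,t]$ is nonzero with $P([X],[Y]) = 0$ in $K_0(\Var_k)$. Because $\phi$ is a ring homomorphism and fixes $\Zb$, applying it gives $P(h_X,h_Y)=0$ in $\Zb[x,y][\nicefrac{1}{xy}]$. This contradicts the hypothesis that $h_X$ and $h_Y$ are algebraically independent, which I read as independent over $\Zb$, or equivalently over $\Qb$ after clearing denominators. So no such $P$ exists, and $[X]$ and $[Y]$ are algebraically independent.

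The step needing the most care is that $\Hod(k)$ is genuinely multiplicative. Additivity is only a statement about $K_0^\oplus$ of finite-dimensional vector spaces, whereas the ring structure requires the Künneth formula for Hodge cohomology of smooth projective varieties. It also requires compatibility of $\Hod(k)^\dagger$ with tensor products of perfect pure motives, including retracts and Tate twists, where a twist multiplies by the appropriate monomial $(xy)^n$. Since \cref{thm:A} asserts that $\Hod(k)$ is a ring homomorphism, I would simply invoke that claim. As an illustration, $\Pb^1$ and a supersingular elliptic curve have Hodge polynomials $1+xy$ and $1+x+y+xy$, which are algebraically independent. So in this application their motivic classes are algebraically independent, answering Naumann's question.
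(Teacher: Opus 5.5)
Your proposal is correct and follows the argument the paper leaves implicit behind the \qed. Under the resoluteness hypothesis, $K_0(\Var_k)\cong K_0(\Var^\res_k)$. Composing with the ring homomorphism of \cref{prop:K0resolute_maps_to_K0resolvable} and then with $\Hod(k)$ from \cref{thm:A} gives a motivic measure that sends smooth projective varieties to their Hodge polynomials, so any algebraic relation between $[X]$ and $[Y]$ would give one between $h_X$ and $h_Y$. Like the paper, you take the multiplicativity of $\Hod(k)$ from \cref{thm:A} rather than reproving it, which is appropriate here.
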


\begin{cor}\label{cor:curves_algebraically_independent}
Let $k$ be a perfect field over which all finite-type $k$-schemes are resolute and let $C$ and $D$ be smooth projective curves of different genus. Then $[C]$ and $[D]$ are algebraically independent in $K_0(\Var_k)$.
\end{cor}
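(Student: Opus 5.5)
The plan is to deduce the corollary from the preceding theorem. By that theorem, it suffices to show that the Hodge polynomials $h_C$ and $h_D$ are algebraically independent in $\Zb[x,y][\nicefrac{1}{xy}]$. The theorem itself rests on the ring homomorphism
\[
K_0(\Var_k) \to \Thorn_k \xrightarrow{\Hod(k)} \Zb[x,y][\nicefrac{1}{xy}],
\]
which comes from \cref{prop:ros_implies_res}, \cref{prop:K0resolute_maps_to_K0resolvable} and \cref{thm:A}. Any polynomial relation between $[C]$ and $[D]$ would map to the same relation between their Hodge polynomials.

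To set this up, I write the Hodge polynomial of a smooth projective curve of genus $g$. Since $h^{0,0}=h^{1,1}=1$ and $h^{1,0}=h^{0,1}=g$ hold in any characteristic (these are $\dim \H^0(\Oc)$, $\dim \H^1(\Oc)$, $\dim \H^0(\Omega^1)$ and $\dim \H^1(\Omega^1)$, which are $1, g, g, 1$ by Serre duality), we get
\[
h_C = 1 + g_C(x+y) + xy, \qquad h_D = 1 + g_D(x+y) + xy ,
\]
up to the sign and indexing conventions used in the paper. These do not affect the argument: changing them amounts to an automorphism of the target ring sending $x \mapsto \pm x$, $y\mapsto \pm y$, which preserves algebraic independence.

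The main step is to show algebraic independence when $g_C \neq g_D$. Put $s=x+y$ and $t=xy$. Then $h_C = 1 + g_C s + t$ and $h_D = 1 + g_D s + t$. The elements $s,t$ are algebraically independent over $\Zb$, since $\Zb[x,y]^{S_2}=\Zb[s,t]$ is a polynomial ring; this stays true inside the localization $\Zb[x,y][\nicefrac{1}{xy}]$, which is a domain containing $\Zb[x,y]$. The map $(s,t)\mapsto (1+g_Cs+t,\,1+g_Ds+t)$ is affine with determinant $g_C - g_D \neq 0$, so it is invertible over $\Qb$. Hence $\Qb[h_C,h_D]=\Qb[s,t]$, which has transcendence degree $2$, and therefore $h_C,h_D$ admit no nontrivial polynomial relation over $\Zb$.

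The only point needing care is the target ring. The theorem requires independence in $\Zb[x,y][\nicefrac{1}{xy}]$ rather than merely in $\Zb[x,y]$, and this is handled by the remark that the localization is a domain containing $\Zb[x,y]$. With the Hodge numbers of curves in hand, everything else is formal.
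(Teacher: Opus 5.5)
Your proposal is correct and follows the paper's proof: map along the ring homomorphism $K_0(\Var_k)\to\Thorn_k\to\Zb[x,y][\nicefrac{1}{xy}]$, compute $1+g(x+y)+xy$ for a genus-$g$ curve, and reduce to the algebraic independence of $x+y$ and $xy$. Your affine change of variables with determinant $g_C-g_D\neq 0$ just makes explicit the step the paper leaves implicit. (Under the corollary's hypothesis you do not need \cref{prop:ros_implies_res}, since resoluteness of all finite-type schemes is assumed directly.)
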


\begin{proof}
It suffices that the Hodge polynomials are algebraically independent. Writing $g_C$ and $g_D$ for the genus of $C$ and $D$, we have
\[
    \operatorname{Hod}(k)_C=1+g_C(x+y)+xy,
    \qquad
    \operatorname{Hod}(k)_D=1+g_D(x+y)+xy.
\]
These polynomials are algebraically independent over $\Qb$ because the elementary symmetric polynomials $x+y$ and $xy$ are.
\end{proof}

\begin{rem}\label{rem:not_isom}
The morphism of \cref{prop:K0resolute_maps_to_K0resolvable} is not an isomorphism. For example, the image of the class of the affine line $\Lb = [\Ab^1]$ is invertible in the target because it corresponds to the invertible $\kgl$-motive $\Sigma^1_\Tb \Mbf^{\Ab^1}_\kgl(S/S)$. This motive is invertible thanks to the presence of the inverse functor to $\Tb$-suspension.  Moreover, there are useful formulas that are easy to prove in $\Thorn_S$ but are hard to prove in $K_0(\Var^\res_S)$ at least without assuming resolution of singularities.
\end{rem}

\begin{ex}\label{ex_fibration}
If $f \colon X \to Y$ is a smooth fibrewise $\Ab^e$-bundle with one of $X$ and $Y$ being resolute, then so is the other and
\begin{equation}\label{eq:affine_fibration_equation}
    [X] = [\mathbb{A}^e] \cdot [Y] \in \Thorn_S.
\end{equation}
This is because $f_!(\Mbf^{\Ab^1}_\kgl(X/X)) \simeq \Sigma^{-e}_\Tb \Mbf^{\Ab^1}_\kgl(X/Y) \simeq \Sigma^{-e}_\Tb \Mbf^{\Ab^1}_\kgl(Y/Y)$, where the second equivalence follows from the relative $\Ab^1$-contractibility of smooth $\Ab^e$-bundles, see e.g. \cite{dubouloz:2026}. We then reduce to this case by decomposing $Y$ based on its residue fields and decompose $X$ compatibly into fibres of $f$. This is done using the localization cofibre sequence, for example as in \cite[Proposition B.3]{BH21} with $E:= \text{cofib}(\Mbf^{\Ab^1}_\kgl(X/Y) \to \Mbf^{\Ab^1}_\kgl(Y/Y)$.
\end{ex}

It seems challenging to prove \cref{eq:affine_fibration_equation} in $K_0(\Var^\res_S)$. The usual proof in the Grothendieck ring of varieties proceeds by stratifying the base into pieces over which the $\Ab^e$-bundle trivializes. However, as one has very limited control over how such strata are chosen, it is unclear if one may trivialize using a stratification by resolute schemes.

\section{Evidence for $\widehat K$-equivalent varieties having the same at-the-characteristic numerical invariants}

Here, we show that, under the assumption that all finite type $k$-schemes are resolute (\cref{conj:everything_is_resolute}), then several at-the-characteristic numerical invariants of $\widehat K$-equivalent varieties (such as birational Calabi--Yau varieties) coincide. This answers the question of de Fernex and Mere who asked whether one should expect $K$-equivalent smooth projective varieties to have the same Hodge numbers \cite{defernex-mere}. If one believes a weak form of resolution of singularities on positive characteristic (\cref{prop:ros_implies_mres}), then the answer is yes.

Throughout the section, $k$ is a perfect field. The results are most conveniently stated using the notion of $\widehat K$-equivalence that has recently been introduced by de Fernex and Mere \cite{defernex-mere}. Before recalling it, we need some preliminary notions.

\begin{rec}[Nash blowups and Mather canonical divisors]
The \emph{Nash blowup} of a $d$-dimensional integral $X$ is the universal proper birational map $\nu \colon \widehat X \to X$ equipped with a surjection $\nu^* \Omega_{X / k} \tto \widehat \Omega_{X/k}$ to a vector bundle $\widehat\Omega_{X/k}$ of rank $d$. We denote by $\widehat \omega_{X/k} := \bigwedge^d \widehat \Omega_{X/k}$ the \emph{Mather canonical bundle}.  

If $f \colon X \to Y$ is a generically étale morphism such that there exists a (necessarily unique) commutative diagram
\begin{equation}
    \begin{tikzcd}
        \widehat X \arrow[r]{}{\widehat f} \arrow[d]{}{\nu} & \widehat Y \arrow[d]{}{\mu} \\
        X \arrow[r]{}{f} & Y
    \end{tikzcd}
\end{equation}
then the \emph{relative Mather canonical divisor} $\widehat K_{X/Y}$ is the effective Cartier divisor whose ideal sheaf is the image of the natural map
\begin{equation}
    \nu^*f^* \omega_{Y/k} \otimes \widehat \omega^\vee_{X/k} \to \nu^* \omega_{X/d} \otimes \widehat \omega^\vee_{X/k} \tto \Oc_{\widehat X}.
\end{equation}    
\end{rec}

\begin{defn}\label{def:Khat_equivalence}
Two birational proper integral varieties $Y$ and $Y'$ are \textit{$\widehat K$-equivalent} if, for every (equivalently, for some) proper birational span $Y \leftarrow X \to Y'$ that induces a span of Nash blowups $\widehat Y \leftarrow \widehat X \to \widehat Y'$, we have $\widehat K_{X/Y} = \widehat K_{X/Y'}$ as Cartier divisors on the normalization of $\widehat X$. 
\end{defn}

\begin{war}
Our definition of $\widehat K$-equivalence differs slightly from \cite[\S 5]{defernex-mere} in that we compare the relative Mather canonical divisors after pulling them back to the normalization of $\widehat X$. This modification is harmless for our purposes because the normalization of the Nash blowup is an isomorphism over the smooth locus of $X$. Hence, by \cite[Remark~3.12]{defernex-mere}, the relative Mather discrepancy function computed on the normalization agrees with the one computed on the Nash blowup. This is the key input in the motivic integration argument of \cite[Theorem~5.6]{defernex-mere} which proves the equality of motivic classes in the completed Grothendieck ring.
\end{war}

\begin{rem}[Numerical equivalence suffices in the smooth case]\label{rem:Khat_numerical_equiv} 
If $Y$ and $Y'$ are smooth, then in \cref{def:Khat_equivalence} it suffices to assume that the divisors $\widehat K_{X/Y}$ and $\widehat K_{X/Y'}$ are numerically equivalent. Indeed, denote the span between the Nash blowups by
\begin{equation}
    Y \stackrel{\widehat f}{\leftarrow} \widehat X \stackrel{\widehat{f}'}{\to} Y' 
\end{equation}
and suppose $\widehat K_{X/Y} \sim_\mathrm{num} \widehat K_{X/Y'}$. As $f$ and $f'$ are birational, the Mather canonical divisors are exceptional, i.e.,
\begin{equation}
    f_* \nu_* \widehat K_{X/Y} = 0 \qquad f'_* \nu_* \widehat K_{X/Y'} = 0
\end{equation}
as algebraic cycles \cite[Remark~3.6]{defernex-mere}. It follows that 
\begin{equation}
    f_* \nu_* (\widehat K_{X/Y'} - \widehat K_{X/Y}) \qquad \text{and} \qquad f'_* \nu_* (\widehat K_{X/Y} - \widehat K_{X/Y'})
\end{equation}
are effective, and as $\pm (\widehat K_{X/Y} - \widehat K_{X/Y'})$ are numerically zero, the negativity lemma (\cite[Lemma~3.39]{kollar-mori}) implies that $\pm (\widehat K_{X/Y} - \widehat K_{X/Y'})$ are effective. Thus, $\widehat K_{X/Y} = \widehat K_{X/Y'}$.
\end{rem}

\begin{ex}[$K$-equivalent varieties]
Recall that smooth proper varieties $Y$ and $Y'$ are $K$-equivalent if there exists a span $Y \leftarrow X \to Y'$ of proper birational maps from smooth variety $X$ such that $K_{X/Y} = K_{X/Y'}$. In this case, the span is also a witness of $\widehat K$-equivalence between $Y$ and $Y'$ because Nash blowup is identity on smooth varieties. 
\end{ex}

\begin{ex}[{Birational Calabi--Yau varieties \cite[Lemma~5.9]{defernex-mere}}] If $Y$ and $Y'$ are birational Calabi--Yau varieties, then one readily obtains a span $Y \leftarrow X \to Y'$ by taking $X$ to be a closure of the graph of a birational equivalence. This span also induces a span between the Nash blowups, and the relative Mather canonical divisors are rationally equivalent thanks to the Calabi--Yau condition. Thus, by \cref{rem:Khat_numerical_equiv}, $Y$ and $Y'$ are $\widehat K$-equivalent.
\end{ex}

\begin{rec}[Motivic integration]
Let us denote by $\widehat\mathcal{M}_k$ the ring $\mathcal{M}_k = K_0(\Var_k)[\Lb^{-1}]$ completed along the filtration $\Fc_n \subset \mathcal{M}_k$ that is generated, as a subgroup, by classes of form $[Y]/\Lb^m$, where $m - \dim(Y) \geq n$. 

The ring $\mathcal{M}_k$ is the ring in which motivic integration takes values. We will not recall the detailed construction here, but instead content ourselves to remembering the following consequence that is a consequence of change of variables formula.
\end{rec}

\begin{thm}[{\cite[Theorem~5.6]{defernex-mere}}]
If smooth proper $k$-varieties $Y$ and $Y'$ are $\widehat K$-equivalent, 
\begin{equation}
    [Y] = [Y'] \in \widehat \mathcal{M}_k.
\end{equation}
\end{thm}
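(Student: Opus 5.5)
The statement is a theorem of de Fernex--Mere, so the plan is to reproduce their motivic integration argument in outline rather than prove anything new. Fix a proper birational span $Y \xleftarrow{f} X \xrightarrow{f'} Y'$ inducing a span of Nash blowups, with $\widehat K_{X/Y} = \widehat K_{X/Y'}$ on the normalization $\widetilde X$ of $\widehat X$. Replacing $X$ by $\widetilde X$ (the Warning above says the relative Mather discrepancy function does not change), I will work with arcs on $X$.

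The central tool is the change of variables formula for motivic integration along the generically étale maps $f$ and $f'$, in the Mather form of de Fernex--Ein--Ishii. For a proper birational map $g\colon X\to Y$ with $Y$ smooth, it says
\[
\int_{\mathcal L(Y)} \Lb^{-\mathrm{ord}\, F}\, d\mu_Y \;=\; \int_{\mathcal L(X)} \Lb^{-\mathrm{ord}\,(g^*F + \widehat K_{X/Y})}\, d\mu_X .
\]
Here $\mu_X$ is the motivic measure on the arc space $\mathcal L(X)$, which is defined via cylinders even when $X$ is singular. Mather discrepancies are precisely what make the formula hold for singular sources. Taking $F=0$ gives
\[
\mu_Y(\mathcal L(Y)) = \int_{\mathcal L(X)} \Lb^{-\mathrm{ord}\,\widehat K_{X/Y}}\, d\mu_X ,
\]
and the same holds with $Y'$ in place of $Y$. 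The hypothesis $\widehat K_{X/Y} = \widehat K_{X/Y'}$ makes the two right-hand sides agree.

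Since $Y$ is smooth of dimension $d$, we have $\mu_Y(\mathcal L(Y)) = [Y]\Lb^{-d}$, because the jet spaces are Zariski-locally trivial $\Ab^{dn}$-bundles. The same formula holds for $Y'$. Multiplying by $\Lb^{d}$ then gives $[Y]=[Y']$ in $\widehat{\mathcal M}_k$.

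The main obstacle is the change of variables formula for a singular source in arbitrary characteristic. This requires Sebag's arc spaces over arbitrary fields, together with the comparison, via the Nash blowup, between the Jacobian order and the Mather discrepancy. This comparison is exactly \cite[Theorem~5.6]{defernex-mere}, so in the paper I would cite it rather than rederive it.
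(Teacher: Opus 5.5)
Your outline is correct and takes the paper's approach. The paper gives no proof of its own: it cites de Fernex--Mere and remarks that the result follows from the Mather-type change of variables formula along both legs of the span, with the Warning's observation about computing on the normalization of $\widehat X$ as the key input, which is exactly what you do. One small correction: the positive-characteristic change of variables formula you defer to is an ingredient of de Fernex--Mere's argument, not \cite[Theorem~5.6]{defernex-mere} itself, which is the statement being proved, so calling that formula ``exactly Theorem~5.6'' makes the citation look circular.
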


Thus, to show that some invariant, such as Hodge numbers or the isomorphism type of crystalline cohomology, depends on the $\widehat K$-equivalence class of a variety only, it suffices to extend it to a homomorphism out of $\widehat \mathcal{M}_k$. To do so, it is convenient to introduce the following condition.

\begin{defn}
A field $k$ satisfies the \emph{($\kgl$-)motivic resolution property} if all finite type $k$-schemes are resolute and for every $n \in \Nb$ and a finite type $k$-scheme $X$ of dimension $n$, the class $[X] \in \Thorn_k$ lies in the subgroup generated by
\begin{equation}
    \{[\Xfr] \, | \, \Xfr \oplus \Yfr \simeq \Mbf_\kgl(P) \text{ for a smooth projective $P$ of dimension $\leq n$}\}.
\end{equation}
In other words, the $\kgl$-motive of $X$ is built from pure motives that are at most $n$-dimensional. 
\end{defn}

Motivic resolution property is implied by a weak version of resolution of singularities.

\begin{prop}\label{prop:ros_implies_mres}
If $k$ is a perfect field over which weak resolution of singularities by projective modifications holds, then it has the motivic resolution property.
\end{prop}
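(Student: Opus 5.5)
We argue by induction on $n = \dim X$, refining the argument of \cref{prop:ros_implies_res}. Resoluteness of all finite type $k$-schemes is already \cref{prop:ros_implies_res}, so it remains to show that $[X] \in \Thorn_k$ (the class of $\Mbf^{\Ab^1}_\kgl(X/k)_c$) lies in the subgroup $G_n$ generated by summands of motives of smooth projective varieties of dimension $\leq n$. Note that $G_{n-1} \subset G_n$, and that $G_n$ is closed under negation and sums. For $n = 0$, $X_\red$ is a finite disjoint union of spectra of finite separable extensions of $k$ (as $k$ is perfect). Since the compactly supported motive only depends on $X_\red$ (localization with empty complement), $[X]$ is a sum of classes of zero-dimensional smooth projective varieties.

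For the inductive step, let $\dim X = n$. Using the localization sequence \cref{eq:localization_c}, we have $[X] = [Z] + [X\setminus Z]$ in $\Thorn_k$ for any closed $Z$. Hence we may remove closed subsets of dimension $<n$, whose classes lie in $G_{n-1}\subset G_n$ by induction. We may also pass to the reduced structure and split off irreducible components, handling their pairwise intersections, which have dimension $<n$, by induction. So it suffices to treat an integral $X$ of dimension $n$. Moreover, whenever $X$ and $X'$ of dimension $n$ have isomorphic dense opens $U \cong U'$, the classes $[X]$ and $[X']$ differ by elements of $G_{n-1}$. Thus it suffices to find, for each integral $X$, some $n$-dimensional scheme birational to it whose class lies in $G_n$.

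To produce such a scheme, we compactify $X$ by Nagata to a proper $\bar X$, apply Chow's lemma to get a projective $\bar X'$ birational to it, and apply weak resolution to get a smooth projective $P$ birational to $\bar X'$. Then $\dim P = n$, and by the example on compactly supported motives of smooth projective varieties, $[P] = [\Mbf^{\Ab^1}_\kgl(P/k)]$ lies in $G_n$ (take $\Yfr = 0$). Then $[X] - [P]$ is a combination of classes of schemes of dimension $<n$ (the complements of common dense opens inside $X$ and $P$), which lie in $G_{n-1}$ by induction. Hence $[X]\in G_n$.

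The only point requiring care is that the Tate twists appearing in the localization sequences do not leave $G_n$. Here, though, the twists are already absorbed into the compactly supported motives $\Mbf^{\Ab^1}_\kgl(Z/k)_c$, whose classes are handled by induction, so no explicit twists $\Sigma^r_\Tb$ arise. The remaining check is that the dimension bookkeeping is consistent, meaning that every scheme fed into the induction has dimension strictly less than $n$. This holds because the removed closed subsets and boundaries are nowhere dense in $n$-dimensional integral schemes. I expect no serious obstacle: the definition of $G_n$ is chosen exactly to make this induction go through.
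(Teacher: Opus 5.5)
Your proposal is correct and follows the same route as the paper. Both argue by induction on dimension using the compactly supported localization sequence, reduce to integral $X$, and replace $X$ by a birational smooth projective model obtained from a Nagata compactification, Chow's lemma, and weak resolution. Your version spells out the bookkeeping the paper leaves implicit: the base case via $X_\red$, the absence of Tate twists in the compactly supported localization sequence, and $[P]$ lying in the subgroup via $\Mbf^{\Ab^1}_\kgl(P/k)_c\simeq\Mbf^{\Ab^1}_\kgl(P/k)$.
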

\begin{proof}
By \cref{prop:ros_implies_res}, all finite type $k$-schemes are resolute. We will show by induction by $n$ that, for every $n \in \Nb$, and every $n$-dimensional finite-type $k$-scheme $X$, $[X]$ is a linear combination of pure motives of dimension at most $n$. The base case $n=0$ is trivial. Moreover, it suffices to concentrate on the case where $X$ is integral.

Suppose that the claim is known for dimensions up to $n-1$. If $X$ is an integral $n$-fold, then it suffices by the inductive assumption to show that $X$ is birational to a smooth projective variety. Like in the proof of \cref{prop:ros_implies_res}, this can be shown by taking a Nagata compactification, using Chow's lemma to make it projective, and then applying resolution of singularities to obtain a smooth projective model.
\end{proof}

\begin{prop}
Suppose $k$ is a field that has the motivic resolution property. Then there are homomorphisms of abelian groups
\begin{align}
    \Hod(k) &\colon \widehat \mathcal{M}_k \to \Zb[[x^{-1},y^{-1}]][xy], \\
    \dR(k) &\colon \widehat \mathcal{M}_k \to \Zb((t^{-1})), \\
    \H W(k) &\colon \widehat \mathcal{M}_k \to K^\oplus_0(W(k))((t^{-1}))
\end{align}
that on classes of smooth projective varies coincide with the Hodge polynomial, the de Rham polynomial, and the polynomial whose coefficients record the isomorphism type of crystalline cohomology.
\end{prop}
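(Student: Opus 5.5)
We begin with the homomorphisms of \cref{thm:A}, composed with the ring homomorphism $K_0(\Var_k) \to \Thorn_k$ of \cref{prop:K0resolute_maps_to_K0resolvable}; this composite is available because the motivic resolution property makes every finite type $k$-scheme resolute. The plan has three steps: assemble graded realizations on $\Thorn_k$, invert $\Lb$, and then pass to the completion by checking that the realizations are continuous for the dimension filtration $\Fc_\bullet$.

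For the first step we assemble the invariants. For Hodge numbers set $\Hod(k)([\Xfr]) = \sum_{i,j} \Hod^{i,j}([\Xfr])\, x^i y^j$, using the homomorphisms $\Hod^{i,j}$ of \cref{thm:numerical_invariants} and \cref{ex:basic_coh_examples}. For a perfect pure motive only finitely many terms are nonzero, and $\Sigma^1_\Tb$ multiplies the polynomial by $xy$. The de Rham and crystalline realizations are built the same way, with $t^i$ for $\dR^i$ and $\H W(k)^i$. Since $\Lb = [\Ab^1]$ maps to the invertible class $[\Sigma^1_\Tb \Mbf_\kgl(k/k)]$ (\cref{rem:not_isom}), the composite $K_0(\Var_k) \to \Thorn_k$ extends uniquely to $\mathcal{M}_k = K_0(\Var_k)[\Lb^{-1}] \to \Thorn_k$. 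On $\Thorn_k$, $\Lb^{-1}$ is sent to $(xy)^{-1}$, respectively $t^{-2}$.

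The heart of the argument, and the main obstacle, is continuity: we must show that every element of $\Fc_n$ is sent into the part of total degree $\le -n$, or $\le -2n$ in the variable $t$. It suffices to treat a generator $[Y]/\Lb^m$ with $m - \dim Y \ge n$. By the motivic resolution property, $[Y]$ is an integer combination of classes $[\Xfr]$, where each $\Xfr$ is a direct summand of $\Mbf_\kgl(P)$ for some smooth projective $P$ with $\dim P \le \dim Y$.

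Next we bound the degrees of these summands. If $\Xfr \oplus \Yfr \simeq \Mbf_\kgl(P)$, then $\Hod^{i,j}(\Xfr) \le \Hod^{i,j}(P)$ because cohomology is additive on direct sums. Hence $\Hod^{i,j}(\Xfr)$ vanishes unless $0 \le i,j \le \dim P \le \dim Y$. The same reasoning shows that de Rham and crystalline cohomology are concentrated in degrees $[0, 2\dim P]$. Dividing by $\Lb^m$ multiplies by $(xy)^{-m}$, respectively $t^{-2m}$. Every monomial in the image therefore has $x$- and $y$-exponents at most $\dim Y - m \le -n$, and $t$-exponent at most $2\dim Y - 2m \le -2n$.

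This is the key point where the motivic resolution property, rather than mere resoluteness, is essential. A general resolvable motive could involve pure pieces with large positive Tate twists, and its class would then not be bounded by $\dim Y$.

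Given these bounds, the homomorphism $\mathcal{M}_k \to \Zb[x,y][\nicefrac{1}{xy}]$ sends $\Fc_n$ into the subgroup of Laurent polynomials whose monomials satisfy $\max(\deg_x,\deg_y) \le -n$. These subgroups define the inverse-limit topology on $\Zb[[x^{-1},y^{-1}]][xy]$, so the map is uniformly continuous and extends to the completion $\widehat{\mathcal{M}}_k$. The same holds for $\Zb((t^{-1}))$, and for $K_0^\oplus(W(k))((t^{-1}))$ coefficientwise.

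Finally, on a smooth projective variety $X$ the resulting map is the original one, since $\Mbf_\kgl(X)_c \simeq \Mbf_\kgl(X)$. The values are therefore the Hodge, de Rham and crystalline polynomials, as claimed.
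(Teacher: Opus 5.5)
Your proposal is correct and follows the paper's proof. In both, the realizations of \cref{ex:basic_coh_examples} are restricted along $\mathcal{M}_k \to \Thorn_k$, and the motivic resolution property is used to show that $\Fc_n$ lands in $(xy)^{-n}\Zb[[x^{-1},y^{-1}]]$ (respectively $t^{-2n}\Zb[[t^{-1}]]$), so the maps factor through the completion; you also spell out two points the paper leaves implicit, namely that the cohomology of a direct summand $\Xfr$ of $\Mbf_\kgl(P)$ vanishes wherever that of $P$ does, and that the target is complete and Hausdorff for this filtration.
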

\begin{proof}
We have homomorphisms from the uncompleted Grothendieck ring which are obtained using the homomorphisms of \cref{ex:basic_coh_examples} by restricted along the ring homomorphism $\mathcal{M}_k \to \Thorn_k$. We are left to factor these homomorphisms through the completion $\mathcal{M}_k \to \widehat \mathcal{M}_k$.

First, we prove the statement for de Rham cohomology. The same proof works for crystalline cohomology. As the de Rham cohomology of a smooth projective variety of dimension $n$ is concentrated in degrees between $0$ and $2n$, motivic resolution property implies that for an finite type $k$-scheme $X$ of dimension $n$, the coefficients of $\dR(k)([X])$ are non-zero only in degrees from $0$ to $2n$. As multiplication by $\Lb$ corresponds to multiplication by $t^2$, the image of $\Fc_n$ in $\Zb((t^{-1}))$ is contained in $t^{-2n}\Zb[[t^{-1}]]$. Thus, $\dR(k) \colon \mathcal{M}_k \to \Zb((t^{-1}))$ factors through the completion.

Next, we prove the statement for Hodge cohomology. As the Hodge cohomology of a smooth pojective $n$-fold is concentrated inside the Hodge diamond, motivic resolution property implies that for an $n$-dimensional finite type $k$-scheme $X$ the coefficients of $\Hod(k)([X])$ are concentrated to terms $x^i y^j$ where $0 \leq i,j\leq n$. As multiplication by $\Lb$ corresponds to multiplication by $xy$, the image of $\Fc_n$ in $\Zb[[x^{-1},y^{-1}]][xy]$ is contained in $(xy)^{-n}\Zb[[x^{-1},y^{-1}]]$. Thus, $\Hod(k) \colon \mathcal{M}_k \to \Zb[[x^{-1},y^{-1}]][xy]$ factors through the completion.
\end{proof}

\begin{cor}\label{cor:Keq_have_same_invariants}
Suppose $k$ is a field that has the motivic resolution property. Then $\widehat K$-equivalent smooth projective $k$-varieties have the same Hodge numbers, de Rham Betti numbers, and isomorphism type of crystalline cohomology.  \qed
\end{cor}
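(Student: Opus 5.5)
The corollary will follow by composing the de Fernex--Mere theorem with the completed realization homomorphisms of the preceding proposition. Let $Y$ and $Y'$ be $\widehat K$-equivalent smooth projective $k$-varieties. By \cite[Theorem~5.6]{defernex-mere}, recalled above, their classes agree in the completed ring:
\[
[Y]=[Y']\in\widehat{\mathcal{M}}_k .
\]
Apply the homomorphisms $\Hod(k)$, $\dR(k)$ and $\H W(k)$ from the preceding proposition, which exist because $k$ has the motivic resolution property. This gives equalities in $\Zb[[x^{-1},y^{-1}]][xy]$, in $\Zb((t^{-1}))$, and in $K^\oplus_0(W(k))((t^{-1}))$, respectively.

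Next, identify these values. Both $Y$ and $Y'$ are smooth projective, so the proposition says each homomorphism sends them to its Hodge polynomial, de Rham polynomial, or crystalline polynomial. These are genuine polynomials, and they embed in the Laurent-type targets. Reading off coefficients gives the claims:
\begin{itemize}
\item the coefficient of $x^iy^j$ is $h^{i,j}$, so the Hodge numbers agree;
\item the coefficient of $t^i$ is $\dim_k \H^i_\dR$, so the de Rham Betti numbers agree;
\item the coefficient of $t^i$ is the class $[\H^i_\crys(Y/W(k))]$ in $K^\oplus_0(\Mod^{\mathrm{fg}}_{W(k)})$.
\end{itemize}

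The main point to check is the crystalline case. There, equality of classes must give isomorphic $W(k)$-modules. This holds because finitely generated modules over the discrete valuation ring $W(k)$ decompose uniquely into cyclic summands, by the structure theorem. Hence the monoid of isomorphism classes is free, and in particular cancellative, so it injects into its group completion. Therefore equal classes in $K^\oplus_0$ mean isomorphic modules, and the crystalline cohomology groups are isomorphic in each degree.

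One further point is that the proposition is only applied to the classes of $Y$ and $Y'$ themselves, namely their images under $K_0(\Var_k)\to\mathcal{M}_k\to\widehat{\mathcal{M}}_k$. Its compatibility statement on smooth projective varieties covers exactly these classes, so no additional argument is needed. Both the de Fernex--Mere theorem and the proposition are stated for smooth proper or smooth projective varieties over the perfect field $k$, which matches our hypotheses.
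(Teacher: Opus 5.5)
Your proposal is correct and follows the same approach as the paper, whose proof is left implicit: combine the de Fernex--Mere equality $[Y]=[Y']$ in $\widehat{\mathcal{M}}_k$ with the completed realization homomorphisms $\Hod(k)$, $\dR(k)$, $\H W(k)$ of the preceding proposition, then read off coefficients. Your remark that finitely generated $W(k)$-modules form a free, hence cancellative, monoid makes explicit a step the paper leaves unstated: it is what turns equality of classes in $K^\oplus_0$ into isomorphism of crystalline cohomology groups.
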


\begin{cor}
Suppose $k$ is a field that has the motivic resolution property. Then birational Calabi--Yau varieties over $k$ have the same Hodge numbers, de Rham Betti numbers, and isomorphism type of crystalline cohomology.  \qed
\end{cor}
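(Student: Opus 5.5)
This corollary is obtained by combining the preceding results, so the plan is to assemble them rather than prove anything new. Let $Y$ and $Y'$ be birational Calabi--Yau varieties over $k$, which in particular are smooth projective.

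First I will produce a $\widehat K$-equivalence between $Y$ and $Y'$. Take $X$ to be the closure of the graph of a birational map $Y \dashrightarrow Y'$, giving a proper birational span $Y \leftarrow X \to Y'$. By the example recalling \cite[Lemma~5.9]{defernex-mere}, this span induces a span of Nash blowups. Since $\omega_Y$ and $\omega_{Y'}$ are trivial, the relative Mather canonical divisors $\widehat K_{X/Y}$ and $\widehat K_{X/Y'}$ both represent $\widehat\omega_{X/k}$ up to pullback of a trivial bundle. Hence they are linearly equivalent, and in particular numerically equivalent. Because $Y$ and $Y'$ are smooth, \cref{rem:Khat_numerical_equiv} upgrades numerical equivalence to equality via the negativity lemma. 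Therefore $Y$ and $Y'$ are $\widehat K$-equivalent in the sense of \cref{def:Khat_equivalence}.

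Next I will apply \cref{cor:Keq_have_same_invariants} directly. Unwinding it, \cite[Theorem~5.6]{defernex-mere} gives $[Y]=[Y']$ in $\widehat{\mathcal M}_k$. The previous proposition then provides homomorphisms $\Hod(k)$, $\dR(k)$ and $\H W(k)$ out of $\widehat{\mathcal M}_k$. On smooth projective varieties these recover, respectively, the Hodge polynomial, the de Rham polynomial, and the isomorphism types of crystalline cohomology. Equality of the images therefore gives equality of all Hodge numbers, de Rham Betti numbers, and crystalline cohomology groups up to isomorphism.

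The main obstacle is the first step: identifying the two Mather divisors up to numerical equivalence. This needs care about pulling back to the normalization of $\widehat X$, together with the observation that $\nu^*f^*\omega_Y \cong \Oc$ and $\nu^*f'^*\omega_{Y'} \cong \Oc$. Both divisors are then cut out by the image of $\widehat\omega^\vee_{X/k}$ tensored with a trivial bundle, so each is linearly equivalent to the divisor of $\widehat\omega_{X/k}$. Once this is in place, everything else is formal.
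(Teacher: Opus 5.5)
Your proposal is correct and takes the same route as the paper: the corollary is immediate from the example showing that birational Calabi--Yau varieties are $\widehat K$-equivalent (via \cite[Lemma~5.9]{defernex-mere}, triviality of the canonical bundles, and \cref{rem:Khat_numerical_equiv}), combined with \cref{cor:Keq_have_same_invariants}. Your added detail fills in the one line the paper leaves implicit: both ideal sheaves are isomorphic to $\widehat\omega^\vee_{X/k}$, so the two Mather divisors are linearly equivalent.
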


\section{Evidence against $K_0(\mathrm{Var}_{k})$ identifying universally homeomorphic varieties}\label{sect:univ_homeo}

Here, we show that, if all varieties over a perfect field $k$ of characteristic $p>0$ are resolute (\cref{conj:everything_is_resolute}), then the Grothendieck ring of varieties $K_0(\Var_k)$ does not identify universally homeomorphic varieties, thereby providing evidence against \cite[Question~1.2]{nicaise-sebag:2009}. We recall that \cref{conj:everything_is_resolute} would follow from resolution of singularities over $k$, so this is strong evidence that the answer to \cite[Question~1.2]{nicaise-sebag:2009} is negative.

Our strategy is to exhibit a universal homeomorphism $f \colon X \to Y$ of smooth projective $k$-varieties such that $X$ and $Y$ do not have the same Hodge numbers (\cref{ex:universal_homeo_changes_hodge}). As Hodge numbers are invariants of the Grothendieck ring of resolute $k$-varieties (combine \cref{ex:basic_coh_examples} and \cref{prop:K0resolute_maps_to_K0resolvable}), this proves that $X$ and $Y$ do not have the same class in $K_0(\Var_k)$ if \cref{conj:everything_is_resolute} is true.

Our example is based on the following observation.

\begin{lem}\label{lem:coherent_coh_of_curve_fam}
Let $B$ be a smooth projective curve of genus $b$ over an algebraically closed field $k$, and let $f \colon E \to B$ be a smooth projective family of genus $g$ curves. Then,
\begin{equation}
    \chi(\Oc_E) =  (g-1) \cdot (b-1) + \deg(\Ec),
\end{equation}
where $\Ec$ is the \emph{Hodge bundle} $f_*(\omega_{E/B})$. 
\end{lem}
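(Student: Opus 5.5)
We compute $\chi(\Oc_E)$ with the Leray spectral sequence for $f$, which reduces everything to Riemann--Roch on the curve $B$. Since $f$ is a smooth projective family of connected genus $g$ curves, we have $f_*\Oc_E = \Oc_B$. The higher direct image $R^1f_*\Oc_E$ is locally free of rank $g$, because $h^1$ of the fibres is constant and cohomology and base change applies. The fibres are one-dimensional, so $R^if_*\Oc_E=0$ for $i\geq 2$. Additivity of Euler characteristics along the Leray spectral sequence (concretely, $Rf_*\Oc_E$ has cohomology sheaves $\Oc_B$ and $R^1f_*\Oc_E$) then gives
\[
\chi(\Oc_E) = \chi(B,\Oc_B) - \chi(B, R^1f_*\Oc_E).
\]

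Next we identify $R^1f_*\Oc_E$ with $\Ec^\vee$ using relative Grothendieck--Serre duality for the smooth proper morphism $f$ of relative dimension one: $R^1f_*\Oc_E \cong (f_*\omega_{E/B})^\vee = \Ec^\vee$. Fibrewise this is Serre duality on each curve, and duality commutes with base change here since all the sheaves involved are locally free. By Riemann--Roch on $B$, $\chi(\Oc_B)=1-b$ and
\[
\chi(B,\Ec^\vee) = \deg(\Ec^\vee) + g(1-b) = -\deg\Ec + g(1-b).
\]
Substituting,
\[
\chi(\Oc_E) = (1-b) + \deg\Ec - g(1-b) = (g-1)(b-1) + \deg\Ec,
\]
which is the claimed formula.

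The step that needs the most care is the duality identification $R^1f_*\Oc_E\cong \Ec^\vee$ together with the local freeness that makes Riemann--Roch apply. We would justify it via $f^!\Oc_B = \omega_{E/B}[1]$ and
\[
Rf_*R\mathcal{H}om(\Oc_E,\omega_{E/B}[1]) \simeq R\mathcal{H}om(Rf_*\Oc_E,\Oc_B).
\]
Taking cohomology sheaves, and using that $Rf_*\Oc_E$ is a perfect complex with locally free cohomology sheaves, gives $f_*\omega_{E/B}\cong (R^1f_*\Oc_E)^\vee$. Dualizing this isomorphism of vector bundles gives the identification. The remaining steps are routine bookkeeping.
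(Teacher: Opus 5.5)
Your proposal is correct and follows the same route as the paper: the Leray spectral sequence and relative Serre duality give $\chi(\Oc_E)=\chi(\Oc_B)-\chi(\Ec^\vee)$, and Riemann--Roch on $B$ then yields $(g-1)(b-1)+\deg(\Ec)$. Your justification of $R^1f_*\Oc_E\cong\Ec^\vee$ and of the local freeness needed for Riemann--Roch spells out steps that the paper leaves implicit.
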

\begin{proof}
Applying Leray spectral sequence and relative Serre duality to $f$, we obtain
\begin{equation}
    \chi(\Oc_E) = \chi(\Oc_B) - \chi(\Ec^\vee).
\end{equation}
As $\chi(\Oc_B) = 1-b$, and 
\begin{align}
    \chi(\Ec^\vee) &= - g \cdot (b-1) - \deg(\Ec)
\end{align}
by Riemann--Roch, we obtain the claimed formula.
\end{proof}

Notice that in the statement of \cref{lem:coherent_coh_of_curve_fam}, the term $(g-1) \cdot (b-1)$ depends only on the base curve $B$ and the fibre curve; only the $\deg(\Ec)$-term depends on the geometry of the family. Combined with the relative Frobenius, this leads to a method for generating universal homeomorphisms whose source and target  have different Hodge numbers.

\begin{prop}\label{prop:relative_frobenius_affects_hodge_numbers}
Let $B$ be a smooth projective curve of genus $b$ over an algebraically closed field $k$ of characteristic $p>0$. If  $f \colon E \to B$ is a smooth projective family of genus $g$ curves and $F_{E/B} \colon E \to E'$ is the relative Frobenius over $B$, then
\begin{equation}
    \chi(\Oc_{E'}) - \chi(\Oc_E) = (p-1) \cdot \deg(\Ec), 
\end{equation}
where $\Ec$ is the Hodge bundle. In particular, $\chi(\Oc_{E'}) \neq \chi(\Oc_E)$ if and only if $\deg(\Ec) \neq 0$. 
\end{prop}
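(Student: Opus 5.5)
We apply \cref{lem:coherent_coh_of_curve_fam} to both $f\colon E \to B$ and the Frobenius twist $f'\colon E' \to B$. Here $E' = E \times_{B,F_B} B$ is the base change of $E$ along the absolute Frobenius $F_B\colon B \to B$. Since $f$ is smooth projective with geometric fibres genus $g$ curves, so is $f'$: smoothness, projectivity and the fibres are preserved by base change, and the fibres of $f'$ are Frobenius twists of those of $f$, which over the algebraically closed residue fields are again smooth projective curves of genus $g$. The base $B$ is unchanged, so the term $(g-1)(b-1)$ is the same for both families, and
\[
\chi(\Oc_{E'}) - \chi(\Oc_E) = \deg(\Ec') - \deg(\Ec),
\]
where $\Ec' = f'_*\omega_{E'/B}$ is the Hodge bundle of $f'$.

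The remaining step is to identify $\Ec'$ with $F_B^*\Ec$. Relative dualizing sheaves are compatible with base change for smooth morphisms, so $\omega_{E'/B} \simeq \pr^*\omega_{E/B}$, where $\pr\colon E' \to E$ is the projection. Cohomology and base change then gives $f'_*\omega_{E'/B} \simeq F_B^* f_*\omega_{E/B}$, because $R^1 f_*\omega_{E/B} \simeq \Oc_B$ is locally free, which forces $f_*\omega_{E/B}$ to commute with arbitrary base change (equivalently, $h^0(\omega_{E_s}) = g$ is constant). Alternatively, since $F_B$ is flat ($B$ being a smooth curve over a perfect field), flat base change applies directly. Hence $\Ec' \simeq F_B^*\Ec$.

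Finally, $F_B$ is a finite flat morphism of degree $p$ from $B$ to itself, since the Frobenius of a smooth curve over a perfect field has degree $p$. So $\deg(F_B^*\Ec) = p\deg(\Ec)$, and substituting gives $\chi(\Oc_{E'}) - \chi(\Oc_E) = (p-1)\deg(\Ec)$. The last assertion follows because $p-1 \neq 0$.

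The main point requiring care is the base change identification $\Ec' \simeq F_B^*\Ec$. Here it is handled by flatness of $F_B$, so no serious obstacle is expected. We should also note that we use the absolute Frobenius of $B$ rather than a $k$-linear twist. This is harmless: \cref{lem:coherent_coh_of_curve_fam} only needs $E' \to B$ to be a smooth projective family of genus $g$ curves, and $\chi(\Oc_{E'})$ is independent of which $k$-structure is chosen.
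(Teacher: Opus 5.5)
Your proposal is correct and follows essentially the same route as the paper: apply \cref{lem:coherent_coh_of_curve_fam} to both $f$ and its Frobenius twist $f'$, so that the difference reduces to $\deg(\Ec')-\deg(\Ec)$, then use base change to identify $\Ec'\simeq F_B^*\Ec$, which gives $\deg(\Ec')=p\deg(\Ec)$. The paper states this base-change step in one line, and you fill in what it leaves implicit: flatness of $F_B$ (or cohomology and base change), and the fact that $\deg F_B^*\Ec=p\deg\Ec$, e.g.\ via $F_B^*\det\Ec\simeq(\det\Ec)^{\otimes p}$.
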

\begin{proof}
It suffices to prove that $\deg(\Ec') = p \cdot \deg(\Ec)$, where $\Ec' = f'_*(\omega_{E'/B})$ is the Hodge bundle of the Frobenius-twisted family $f' \colon E' \to B$. This follows from the fact that base change identifies $\Ec'$ with $F_B^* \Ec$, where $F_B \colon B \to B$ is the absolute Frobenius.
\end{proof}

Thus, what remains is to exhibit a family of curves $f \colon E \to B$ over a base curve $B$ whose Hodge bundle has non-zero degree. 

\begin{ex}[Universal homeomorphism that does not preserve Hodge numbers]\label{ex:universal_homeo_changes_hodge}
If $k$ is an algebraically closed field, there exists a smooth projective curve $B$ and a non-isotrivial family $f \colon E \to B$ of genus $g \geq 3$ curves, see e.g. \cite[Corollary~B.5]{saidi:2005}. Being \emph{non-isotrivial} means that $f$ defines a non-constant map $B \to M_g$, where $M_g$ is the moduli space of smooth curves. Mumford has shown that the determinant bundle of the universal Hodge bundle determines an ample $\Qb$-divisor on $M_g$, see e.g. \cite[Corollary~5.18]{mumford:1977} for a stronger statement about ampleness of certain modular line bundles on the moduli space of stable curves. As the Hodge bundle of $E \to B$ is pulled back from the moduli of curves, the non-constancy of $f$ implies that $\deg(\Ec) > 0$. By \cref{prop:relative_frobenius_affects_hodge_numbers}, the relative Frobenius $F_{E/B} \colon E \to E'$ is an example of a universal homeomorphism such that $\chi(\Oc_E) \neq \chi(\Oc_{E'})$. Consequently, the source and the target have different Hodge numbers.

More generally, if $k$ is perfect but not necessarily algebraically closed, an example over $\bar k$ descends to over a finite extension of $k$, and therefore determines a universal homeomorphism of smooth projective $k$-varieties such that the source and the target have different Hodge numbers.
\end{ex}

\begin{rem}\label{rem:one_hopes_to_invert_p}
It is tempting to try to avoid the need for resolution of singularities by replacing the Hodge cohomology groups with the $p$-inverted Hodge--Witt cohomology groups \cite{illusie:1979} and using the fact that $K_0(\Var^\res_k)[\nicefrac{1}{p}] \to K_0(\Var_k)[\nicefrac{1}{p}]$ is an isomorphism for a field $k$ of characteristic $p$.\footnote{The resolvability of $\Mbf^{\Ab^1}_\kgl(U)[\nicefrac{1}{p}]$ for all smooth $k$-varieties $U$ follows from \cite[Proposition~B.1]{levine:ellipticI}.} Unfortunately, even though Hodge--Witt sheaves are mixed-characteristic lifts of the sheaves of differential forms, the $p$-inverted Hodge--Witt cohomology groups behave differently with respect to universal homeomorphisms: Illusie has shown that universal homeomorphisms induce isomorphisms on $p$-inverted Hodge--Witt groups \cite[Corollary~3.3.4(a)]{illusie:2021}.
\end{rem}

\appendix

\section{Numerical invariants of Calabi--Yau fourfolds}\label{app:CY4}

The purpose of this section is to show the following result, which was discovered using GPT 5.5 and GPT 6.

\begin{thm}\label{thm:CY4_unconditional}
Birational Calabi--Yau varieties up to dimension four over any perfect field $k$ have the same Hodge numbers, dimensions of de Rham cohomology, and the isomorphism type of crystalline cohomology.    
\end{thm}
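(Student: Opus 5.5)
The approach is to make the conditional argument of \cref{cor:Keq_have_same_invariants} unconditional in dimension at most four. Resolution of singularities is known in low dimension: for curves and surfaces classically, and for threefolds over perfect fields of characteristic $p>0$ by Cossart--Piltant. So weak resolution of singularities by projective modifications holds for all finite-type $k$-schemes of dimension at most $3$.

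\textbf{Reduction.} Let $Y,Y'$ be birational Calabi--Yau varieties of dimension $n\le 4$. By \cite[Theorem~5.6]{defernex-mere}, $[Y]=[Y']$ in $\widehat{\mathcal M}_k$. The motivic integration argument exhibits $[Y]-[Y']$ as a convergent sum of classes $[Z]\Lb^{-m}$. Here the $Z$ are strata of arc-space truncations lying over the exceptional loci, which are proper closed subsets of $Y$ and $Y'$ and hence have dimension $\le n-1\le 3$. Since $Y,Y'$ are smooth projective, they are resolute by \cref{res_prop}. The strata $Z$ are (up to affine-bundle factors, handled by \cref{ex_fibration} in $\Thorn_k$) pieces of varieties of dimension $\le 3$. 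By the proof of \cref{prop:ros_implies_res} and \cref{prop:ros_implies_mres}, applied using only resolution in dimension $\le 3$, every finite-type $k$-scheme of dimension $\le 3$ is resolute and satisfies the dimension bound of the motivic resolution property. So I would define a subring $\mathcal M^{\le}_k\subset\widehat{\mathcal M}_k$, topologically generated by resolute schemes that are either smooth projective or of dimension $\le 3$. On this subring the realizations $\Hod(k)$, $\dR(k)$ and $\H W(k)$ extend continuously, by exactly the degree argument in the proposition preceding \cref{cor:Keq_have_same_invariants}.

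\textbf{Key step.} One must check that the change of variables identity $[Y]=[Y']$ holds already in this completed subring. That is, we need to rerun de Fernex--Mere's proof, which uses the span $Y\leftarrow X\to Y'$ with $X$ the graph closure, while never using scissor relations involving non-resolute schemes of dimension $4$. I would restrict to the open sets where $X\to Y$ and $X\to Y'$ are isomorphisms. These give isomorphic opens $U\subset Y$ and $U'\subset Y'$, and their complements have dimension $\le 3$. The opens $U$ and $U'$ are resolute by \cref{res_prop}(2), since $Y$ and the complement are resolute. The motivic integrals then differ only over arcs with bounded-below contact order with the exceptional loci, and cylinders over these are affine bundles over constructible subsets of jet spaces of dimension $\le 3$ plus fibres. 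The finite-level scissor relations all live in dimension $\le 3$ or involve $U$, so they hold in $K_0(\Var^\res_k)$.

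\textbf{Main obstacle.} The hardest point is the fibre structure of the truncation maps of arc spaces. These are piecewise-trivial affine bundles, and the piecewise trivialization is uncontrolled. I would handle it in $\Thorn_k$ directly via \cref{ex_fibration}, which needs no stratification by resolute pieces, provided the base pieces are of dimension $\le 3$ and hence resolute. The Calabi--Yau hypothesis enters by giving equality of Mather discrepancy divisors. This makes the integrands $\Lb^{-\mathrm{ord}\,\widehat K}$ agree on both sides, so the final comparison of realizations yields equal Hodge, de Rham and crystalline polynomials.
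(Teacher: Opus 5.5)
Your argument breaks at the Key Step. You claim that everything entering de Fernex--Mere's change of variables is built from schemes of dimension $\le 3$, but that is not true.

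\textbf{Where the dimension count fails.} The classes in that argument are $[\pi_m(C)]\,\Lb^{-4(m+1)}$. Here $C$ ranges over cylinders in $\mathcal L(Y)$, in $\mathcal L(Y')$, and in the arc spaces of the singular graph closure $X$ and of its Nash blowup; these are contact loci of $\widehat K_{X/Y}$ and of the relevant Jacobian-type ideals. The truncations $\pi_m(C)$ \emph{lie over} the exceptional loci, but they are constructible subsets of jet schemes whose dimension grows linearly in $m$. Already over a single point $y$ of the exceptional locus in $Y$, the $m$-jets at $y$ whose lift to $X$ has prescribed order along $\widehat K_{X/Y}$ form a constructible subset of $\Ab^{4m}$. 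Its shape is governed by the four-dimensional map $X\to Y$ near $y$. Resolution of threefolds tells you nothing about such sets.

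\textbf{What would actually be needed.} The description ``affine bundle over a base of dimension $\le 3$'' holds when the relevant divisor is snc on a smooth model, where contact loci fibre over the strata $E_J^\circ$. Producing such a model is a log resolution of a four-dimensional pair, which is exactly the input missing in characteristic $p$. Note also that jet schemes of a threefold $E$ have dimension $\ge 3(m+1)$, and arcs on $X$ with given contact along $E$ are not arcs of $E$ in any case.

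\textbf{Why \cref{ex_fibration} does not help.} It needs an honest smooth morphism all of whose fibres are $\Ab^e$, with resolute base or source. The Denef--Loeser fibration lemma only gives piecewise trivial fibrations over an uncontrolled stratification of a high-dimensional image in $\mathcal L_m(Y)$. So you never obtain $[Y]=[Y']$ in a subring on which $\Hod$, $\dR$, $\H W$ are defined. A warning sign is that your argument never uses the Calabi--Yau condition beyond $\widehat K$-equivalence. It would settle de Fernex--Mere's question for all $\widehat K$-equivalent fourfolds using only three-dimensional resolution.

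\textbf{The paper's route.} The paper avoids arc spaces and resolution entirely. It uses that birational Calabi--Yau varieties share an open subset whose complements $T,S$ have codimension $\ge 2$. It then takes the correspondence $[\Gamma]$ of the graph closure, with $[\Gamma]^t\circ[\Gamma]=\Id+\alpha$ and $[\Gamma]\circ[\Gamma]^t=\Id+\beta$. Here $\alpha,\beta$ are $d$-dimensional cycles supported on $T\times T$ and $S\times S$.

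For $d\le 3$ these cycles vanish for dimension reasons, so the motives are isomorphic (\cref{lem:CY3_have_same_Chow_motive}). For $d=4$, over $\bar k$ one has $\alpha=\sum a_i\times b_i$ with $a_i,b_i\in\CH^2$ (\cref{lem:alpha_is_product}). Such a term acts by $c\mapsto(\int_X a\smile c)\,b$. It is therefore zero outside Hodge bidegree $(2,2)$, and zero on torsion in $\H^4_{\crys}$.

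The remaining invariants are $h^{2,2}$ and the rank of $\H^4_{\crys}$. They are pinned down by the Riemann--Roch identity $22\chi(\Oc_X)-4\chi(\Omega^1_X)-\chi(\Omega^2_X)=0$ and by Euler characteristic identities. This is where the Calabi--Yau and $\dim\le 4$ hypotheses genuinely enter.
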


The following observation is the main geometric input.

\begin{lem}
Let $X$ and $Y$ be birational Calabi--Yau varieties. Then there exists open subvarieties $U \subset X$ and $V \subset Y$ such that $U \cong V$ and the codimensions of $X \setminus U \subset X$ and $Y \setminus V \subset Y$ are at least 2.
\end{lem}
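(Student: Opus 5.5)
The plan is to run the classical argument that a birational map between minimal models is an isomorphism in codimension one. Here a Calabi--Yau variety means a smooth projective variety with $K_X \sim 0$ (or at least $K_X$ nef). The key input is the negativity lemma, in the same spirit as \cref{rem:Khat_numerical_equiv}.

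Fix a birational map $\phi \colon X \dashrightarrow Y$ and let $W$ be the normalization of the closure of its graph. This gives proper birational morphisms $p \colon W \to X$ and $q \colon W \to Y$. Let $U \subset X$ be the largest open set on which $\phi$ is a morphism that is a local isomorphism onto its image, and define $V \subset Y$ similarly for $\phi^{-1}$; then $\phi|_U \colon U \to V$ is an isomorphism. Since $X$ is normal, $\phi$ is defined in codimension one, so it suffices to show that no prime divisor of $X$ is contracted by $\phi$, and symmetrically for $Y$. Granting this, every codimension-one point of $X$ lies in $U$, so $X \setminus U$ has codimension at least $2$. (Here one uses that a birational morphism between normal varieties that does not contract divisors at the generic point of a divisor is a local isomorphism there, since both local rings are the same DVR.)

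To rule out contracted divisors, I will compare canonical divisors on the smooth locus of $W$, or on a resolution of $W$ if one exists. In dimension at most four this is fine: resolution of singularities is known for threefolds in all characteristics (Cossart--Piltant). For fourfolds I would instead work directly with the normal variety $W$ and Weil divisors, using that $K_X$ and $K_Y$ are Cartier. Write
\begin{equation}
K_W = p^*K_X + E = q^*K_Y + F,
\end{equation}
where $E$ and $F$ are the discrepancy divisors. Since $X$ and $Y$ are smooth, $E$ is effective with support equal to the full $p$-exceptional locus: every exceptional divisor of a blowup of a smooth variety has discrepancy at least $1$. The same holds for $F$ with respect to $q$. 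Because $K_X \sim 0 \sim K_Y$, we get $E - F \sim 0$, so $E - F$ is numerically trivial. Now $p_*(F - E) = p_*F$ is effective, and $F - E$ is $p$-nef, so the negativity lemma (\cite[Lemma~3.39]{kollar-mori}) gives that $E - F$ is effective. Applying the same argument to $q$ shows $F - E$ is effective. Hence $E = F$, which means $p$ and $q$ have the same exceptional divisors. A prime divisor $D \subset X$ contracted by $\phi$ would have strict transform on $W$ that is $q$-exceptional but not $p$-exceptional, which is a contradiction.

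The main obstacle I expect is making the negativity lemma and the discrepancy statement work in positive characteristic, and in dimension four, where a resolution of $W$ is not available. The negativity lemma is characteristic-free for a proper birational morphism from a normal variety: it reduces to the Hodge index theorem on surfaces by cutting with hyperplanes. The claim that exceptional divisors over a smooth $X$ have positive discrepancy is also characteristic-free, since it follows from computing $p^*\omega_X \subset \omega_W$ at the generic point of each exceptional divisor. So I would phrase everything on the normal variety $W$ with $\Qb$-Cartier pullbacks and avoid resolutions entirely. If $K_X$ is only assumed to be torsion, the identical argument still works because numerical triviality is all that is needed.
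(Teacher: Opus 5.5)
Your argument is correct, but it takes a different route from the paper. The paper works directly with trivializing top forms. The map $\phi$ is defined on an open $X^\circ$ whose complement has codimension at least two. By Hartogs, $\phi^*\eta_Y$ extends to a global section $a\,\eta_X$ of $\omega_X$, and $a$ is a unit because $X$ is proper and integral. So $\phi|_{X^\circ}$ is \'etale and birational, hence an open immersion, and doing the same for $\phi^{-1}$ and intersecting produces $U$ and $V$. You instead pass to the normalized graph $W$, compare the discrepancy divisors $E$ and $F$ using the negativity lemma, and finish with the DVR argument at codimension-one points.

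There is a harmless sign slip. Applying the negativity lemma (Koll\'ar--Mori, Lemma~3.39) to $p$ with $B=F-E$ gives $F-E\geq 0$, not $E-F\geq 0$. The application to $q$ gives the other inequality, so $E=F$ still follows.

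The two proofs are closer than they look. When $K_X\sim 0\sim K_Y$, you have $F-E=\operatorname{div}_W(h)$ for the Jacobian $h=q^*\eta_Y/p^*\eta_X$. Then $\operatorname{div}_X(h)=p_*(F-E)=p_*F\geq 0$ already forces $h$ to be a unit, so $E=F$ with no negativity lemma at all. This is exactly the paper's Hartogs step.

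What each version buys: the paper's is more elementary and gives a stronger conclusion. It avoids the negativity lemma, whose characteristic-free proof needs hyperplane-section arguments and the Hodge index theorem on normal surfaces, and it shows that $\phi$ is an open immersion on its entire domain of definition. Your version buys generality. It only uses that $E-F=q^*K_Y-p^*K_X$ is $p$-nef and that $F-E$ is $q$-nef, so essentially the same argument proves the classical statement that birational smooth projective varieties with nef canonical class are isomorphic in codimension one. Finally, your discussion of resolution for threefolds and fourfolds is unnecessary, as you conclude yourself, since everything can be done on the normal variety $W$.
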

\begin{proof}
Let $f\colon X\dashrightarrow Y$ be a birational map, and choose nowhere-vanishing top forms $\eta_X$ and $\eta_Y$. By properness of $Y$, the map $f$ is defined on an open subset $X^\circ\subset X$ whose complement has codimension at least two. By Hartogs Lemma, the form $f^*\eta_Y$ extends to a global section of $\omega_X$. Writing this section as $a\eta_X$, we have $a\in H^0(X,\mathcal O_X)\setminus\{0\}$. Since $X$ is proper and integral, $a$ is a unit. Thus $f|_{X^\circ}$ is \'etale and birational, hence an open immersion.

Applying the same argument to $f^{-1}$ gives an open immersion
$g\colon Y^\circ\hookrightarrow X$, where $Y\setminus Y^\circ$ has
codimension at least two. Set
\[
U=X^\circ\cap f^{-1}(Y^\circ),
\qquad
V=Y^\circ\cap g^{-1}(X^\circ).
\]
The maps $f$ and $g$ restrict to inverse
isomorphisms between $U$ and $V$. Moreover, as open immersions preserve codimensions, we see that the complements of $U$ and $V$ have codimension at least two, as desired.
\end{proof}

For varieties up to dimension three, \cref{thm:CY4_unconditional} follows from the following result combined with the fact that Hodge, de Rham, and crystalline cohomology are modules over motivic cohomology and therefore define functors out of $\DM^\res(k)$. Here, $\DM^{\Ab^1}(k) \simeq \Mod_{\H\Zb}(\SH(k))$ stands for $\H \Zb$-linear $\Ab^1$-invariant motives, usually referred to simply as \emph{($\Ab^1$-invariant) motives}, and $\DM^\res(k)$ is the stable infinity category generated by the $\Ab^1$-invariant motives of Tate twists of smooth projective varieties. Note that, similarly to the $\kgl$-linear case, the category of resolvable motives coincides with the stable infinity category generated by the Tate twists of smooth projective varieties in $\Mod_{\H \Zb} (\MS_S)$.

\begin{lem}\label{lem:CY3_have_same_Chow_motive}
Let $X$ and $Y$ be birational Calabi--Yau threefolds. Then, 
\begin{equation}
\Mbf^{\Ab^1}_{\H \Zb}(X/k) \simeq \Mbf^{\Ab^1}_{\H \Zb}(Y/k) \in \DM^\res(k). 
\end{equation}
\end{lem}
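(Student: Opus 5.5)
The plan is to use the preceding lemma to obtain a common open $U\cong V$ whose complements $Z := X\setminus U$ and $W := Y\setminus V$ have codimension at least two, hence dimension at most one. The motives of $X$ and $Y$ are then compared through $U$ by means of Gysin/localization triangles. Throughout I work with $\H\Zb$-linear $\Ab^1$-invariant motives over the perfect field $k$. Since $X$ is smooth projective, the localization cofibre sequence of \cref{rec:localization} can be written (after dualizing, or directly by purity) as
\[
\Mbf^{\Ab^1}_{\H\Zb}(Z/k)_\BM \to \Mbf^{\Ab^1}_{\H\Zb}(X/k)_\BM \to \Mbf^{\Ab^1}_{\H\Zb}(U/k)_\BM,
\]
and similarly for $Y$ with $W$ and $V$. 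The curves and points occurring in $Z$ and $W$ are resolute unconditionally, because resolution of singularities holds in dimension $\le 1$ (normalization). Consequently every term of both sequences lies in $\DM^\res(k)$, and by Atiyah duality $\Mbf_\BM(X) \simeq \Sigma^{-3}_\Tb \Mbf(X)$.

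The key step is a weight argument. With respect to the Chow weight structure, $\Mbf_\BM(X)$ is pure of weight $0$. After twisting, $\Mbf_\BM(Z)$ is built from $\Mbf_\BM$ of smooth projective curves and points, i.e.\ from motives $\Sigma^{-1}_\Tb\Mbf(C)$ and $\Mbf(\mathrm{pt})$. Relative to $\Sigma^{-3}_\Tb$-normalized $X$, these are pieces of dimension $\le 1$ sitting in Tate twists far from the middle. The strategy is to show that the boundary map $\Mbf_\BM(U)\to\Mbf_\BM(Z)[1]$ is determined by $U$ alone, so that $\Mbf(X)$ can be recovered from $U$ in a way independent of the compactification. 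Concretely, I would show that $\Mbf_\BM(X)$ is the ``weight $\geq 0$ truncation'' (the $w_{\ge 0}$-part, or more precisely the weight-zero piece of a weight complex) of $\Mbf_\BM(U)$. For this I would prove that $\Mbf_\BM(Z)$ has weights $\le 0$ and that the map $\Mbf_\BM(X)\to \Mbf_\BM(U)$ induces an isomorphism on the part of weight $\ge 0$ relevant in codimension $\ge 2$. In practice the easier route is Hom computations: maps between $\Sigma^{a}_\Tb\Mbf(C)$ for curves with $a\le -2$ relative twist and the pure motive of a threefold vanish in the wrong degrees. The reason is that $\H\Zb$ vanishes above the Chow line, so $\mathrm{Hom}(\Mbf_\BM(X), \Mbf_\BM(Z)[1])$ is computed by Chow groups in negative codimension or by higher Chow groups that vanish. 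Hence the connecting maps $\Mbf_\BM(Z)\to\Mbf_\BM(X)$ and its counterpart can be analyzed, and $\Mbf_\BM(X)\simeq\Mbf_\BM(Y)$ follows by lifting the isomorphism $\Mbf_\BM(U)\simeq\Mbf_\BM(V)$.

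An alternative route, which I would try first because it is more concrete, is to construct correspondences. The closure $\Gamma\subset X\times Y$ of the graph defines a Chow correspondence $[\Gamma]\colon \Mbf(X)\to\Mbf(Y)$, and its transpose goes back. The composites $[\Gamma]^t\circ[\Gamma]$ and $[\Gamma]\circ[\Gamma]^t$ differ from the diagonal by cycles supported on $Z\times Z$ (respectively $W\times W$) of dimension $3$. Because $\dim Z\le1$, the product $Z\times Z$ has dimension $\le 2<3$, so there is no room for a $3$-dimensional cycle there, and the difference vanishes. This is the standard fact that isomorphism in codimension one gives isomorphic Chow motives: the error term is a $3$-cycle supported on a set of dimension $\le 2$. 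The main obstacle is verifying carefully, using the moving and composition formalism for Chow correspondences, that the error in the composition is indeed supported on $Z\times Z$ and not merely on $Z\times X\cup X\times Z$. A $3$-cycle supported on $Z\times X$ can be nonzero, so this support claim is where the argument must be done with care. I would handle it by computing the composition through $U$ using the localization sequence for Chow groups of $X\times X$ restricted to $U\times U$. Once this is settled, Chow motives embed fully faithfully into $\DM^\res(k)$ as the weight heart, which gives the lemma.
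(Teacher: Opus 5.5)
Your correspondence route is the paper's proof. The paper writes $[\Gamma]^t\circ[\Gamma]=\Id_X+\alpha$ and $[\Gamma]\circ[\Gamma]^t=\Id_Y+\beta$, with $\alpha,\beta$ degree-$3$ classes supported on $Z\times Z$ and $W\times W$, and kills them because these products have dimension $\le 2$. The paper asserts the support statement without argument, and you are right that it is the crux.

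\textbf{The gap.} Your proposed way of proving the support statement does not work. Restricting to $U\times U$ and using localization only shows that $\alpha$ lies in the image of $\CH_3\bigl((Z\times X)\cup(X\times Z)\bigr)$. As you note yourself, this group need not vanish; think of $[C\times D]$ for a curve $C\subset Z$ and a surface $D\subset X$. So your plan breaks down at its one nontrivial step. The weight-structure route does not fill the gap either. Chow weights are blind to Tate twists, and weight truncations are not functorial. So you cannot lift $\Mbf^{\Ab^1}_{\H\Zb}(U/k)_\BM\simeq\Mbf^{\Ab^1}_{\H\Zb}(V/k)_\BM$ that way without exactly the correspondence bookkeeping that is missing.

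\textbf{The missing idea: the refined intersection product.} Put $g=(f|_U)^{-1}\colon V\to U$.
\begin{enumerate}
\item Fulton's refined Gysin map gives $p_{12}^*[\Gamma]\cdot p_{23}^*[\Gamma^t]$ as a class in $\CH_3(G)$, where $G=\Gamma\times_Y\Gamma=\{(x,y,x') : (x,y),(x',y)\in\Gamma\}\subset X\times Y\times X$.
\item Graphs of morphisms to separated schemes are closed. Hence $\Gamma\cap(U\times Y)$ is the graph of $f|_U$, which lands in $V$, and $\Gamma\cap(X\times V)$ is the graph of $g$.
\item Consequently, over $V$ the set $G_V$ is the smooth threefold $\{(g(y),y,g(y))\}$, and the two cycles meet transversally there.
\item Every point of $G_W:=G\cap p_2^{-1}(W)$ has both $x\in Z$ and $x'\in Z$.
\item Refined Gysin maps commute with restriction to opens. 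So the sequence $\CH_3(G_W)\to\CH_3(G)\to\CH_3(G_V)\to 0$ shows that the refined class equals $[\overline{G_V}]$ plus a class coming from $\CH_3(G_W)$.
\item Apply $p_{13*}$. The first term gives $[\Delta_X]$, since $\overline{G_V}\to\Delta_X$ is birational. The second term lands in the image of $\CH_3(Z\times Z)=0$.
\end{enumerate}

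\textbf{An alternative fix closer to your idea.} Restrict separately to $U\times X$, where $f|_U\times\Id_X$ is an open immersion and the restriction is $[\Delta_U]$. By the symmetry $\bigl([\Gamma]^t\circ[\Gamma]\bigr)^t=[\Gamma]^t\circ[\Gamma]$, do the same on $X\times U$. This makes $\alpha$ factor through the motive of the normalization of $Z$ in two different Tate twists. That forces $\alpha^2=0$, so $\Id_X+\alpha$ is invertible. This also needs more than the restriction to $U\times U$.

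With the support claim established, the rest of your argument goes through as in the paper, by transporting the Chow-motive isomorphism into $\DM^\res(k)$.
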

Before beginning the proof, let us fix notation that will also be used later. Let $X$ and $Y$ be birational Calabi--Yau $d$-folds,  and let $T \subset X$ and $S \subset Y$ be the closed subsets of codimension at least two such that $X \setminus T \cong Y \setminus S$, and let $\Gamma \subset X \times Y$ be the closure of the graph of the birational equivalence. The resulting Chow correspondence and its transpose define morphisms
\begin{align*}
    [\Gamma] &\colon \Mbf^{\Ab^1}_{\H \Zb}(X/k) \to \Mbf^{\Ab^1}_{\H \Zb}(Y/k) \\
    [\Gamma]^t &\colon \Mbf^{\Ab^1}_{\H \Zb}(Y/k) \to \Mbf^{\Ab^1}_{\H \Zb}(X/k)
\end{align*}
whose compositions are given by Chow correspondences
\begin{align*}
    [\Gamma]^t \circ [\Gamma] &= \Id_X + \alpha \\
    [\Gamma] \circ [\Gamma]^t &= \Id_Y + \beta
\end{align*}
where $\alpha$ and $\beta$ are Chow-classes of degree $d$ supported on $T \times T$ and $S \times S$, respectively.

\begin{proof}[Proof of \cref{lem:CY3_have_same_Chow_motive}]
If $d \leq 2$, birational Calabi--Yau $d$-folds are in fact isomorphic, so the claim is trivial in this case. If $d = 3$, then the classes $\alpha$ and $\beta$ vanish for dimension reasons because $T \times T \subset X \times X$ and $S \times S \subset Y \times Y$ have codimension at least $4$.
\end{proof}

The argument is more complicated for fourfolds because $\alpha$ and $\beta$ might not vanish. Moreover, we may assume that the field $k$ is algebraically closed, as extension of scalars does not affect Hodge numbers or the isomorphism type of crystalline cohomology.

\begin{lem}\label{lem:alpha_is_product}
Suppose that $k$ is algebraically closed. Then, 
\begin{align*}
    \alpha &= \sum_i a_i \times b_i
\end{align*}
where $a_i,b_i \in \CH^2(X)$. Analogous result holds for $\beta$.
\end{lem}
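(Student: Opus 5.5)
The idea is that $\alpha$ lives in the top-dimensional Chow group of $T\times T$, and that group is just the free abelian group on the top-dimensional irreducible components. So no rational equivalence can intervene, and each such component is visibly a product of surfaces.

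\textbf{Step 1: reduce to a cycle on $T\times T$.} Say $\alpha$ is supported on $T\times T$ in the sense that it lies in the image of the proper pushforward
\begin{equation*}
\iota_*\colon \CH_4(T\times T)\to \CH_4(X\times X)=\CH^4(X\times X),
\end{equation*}
where $\iota\colon T\times T\hookrightarrow X\times X$ is the closed embedding. This is how the correspondence computation $[\Gamma]^t\circ[\Gamma]=\Id_X+\alpha$ produces $\alpha$: the discrepancy between $\Gamma^t\circ\Gamma$ and the diagonal comes from the part of the composite cycle lying over $T\times T$. If that computation instead only gives vanishing of the restriction to the complement of $T\times T$, the localization exact sequence
\begin{equation*}
\CH_4(T\times T)\to\CH_4(X\times X)\to\CH_4(X\times X\setminus T\times T)\to 0
\end{equation*}
yields a preimage. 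Either way, fix a lift $\tilde\alpha\in\CH_4(T\times T)$ with $\iota_*\tilde\alpha=\alpha$.

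\textbf{Step 2: compute the top Chow group.} Since $\dim T\le 2$, the scheme $T\times T$ has dimension at most $4$. For any scheme $Z$ of dimension $n$ there are no $(n+1)$-dimensional subvarieties, hence no rational equivalences among $n$-cycles. Therefore $\CH_n(Z)$ is the free abelian group on the $n$-dimensional irreducible components of $Z$. Here we use $k=\bar k$: if $T_1,\dots,T_m$ are the $2$-dimensional irreducible components of $T$, then each $T_i\times T_j$ is irreducible of dimension $4$. Every irreducible component of $T\times T$ is of the form $T'\times T''$ for components $T',T''$ of $T$, so these are exactly the $4$-dimensional components. Hence
\begin{equation*}
\tilde\alpha=\sum_{i,j} n_{ij}\,[T_i\times T_j], \qquad n_{ij}\in\Zb.
\end{equation*}
If $T$ has no $2$-dimensional components, then $\alpha=0$ and there is nothing to prove.

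\textbf{Step 3: push forward.} Pushing forward to $X\times X$ gives $[T_i\times T_j]=[T_i]\times[T_j]$, the exterior product of cycle classes. Setting $a_{ij}=n_{ij}[T_i]$ and $b_{ij}=[T_j]$, both in $\CH^2(X)$ because $\dim X=4$, we get $\alpha=\sum a_{ij}\times b_{ij}$. The same argument with $S\subset Y$ handles $\beta$.

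The main obstacle is Step 1. One has to be precise that ``supported on $T\times T$'' really produces a class in $\CH_4(T\times T)$, rather than merely a cycle class whose support happens to lie there. This requires tracing the composition of correspondences through the localization sequence. After that, Steps 2 and 3 are formal. The only place where algebraic closedness is used is to ensure that $T_i\times T_j$ is irreducible. Without it, $T_i\times T_j$ could split into several components that are not themselves products.
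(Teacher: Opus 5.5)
Your proposal is correct and is essentially the paper's argument. Since $\dim T\le 2$, a $4$-dimensional class on $T\times T$ is an integer combination of the $4$-dimensional irreducible components; over $k=\bar k$ these are the products $T_i\times T_j$ of $2$-dimensional components of $T$, and pushing forward gives $\alpha=\sum n_{ij}[T_i]\times[T_j]$ with $[T_i],[T_j]\in\CH^2(X)$. Your Step 1 only makes explicit the lift to $\CH_4(T\times T)$ that the paper builds into the phrase ``supported on $T\times T$''. If you want to verify that lift, use the refined intersection class of $[\Gamma]^t\circ[\Gamma]$ on $\Gamma\times_Y\Gamma^t$. Checking vanishing separately on $(X\setminus T)\times X$ and $X\times(X\setminus T)$ is not enough, because vanishing on two opens does not imply vanishing on their union (e.g.\ the point class on $\Pb^1$ dies on both $\Pb^1\setminus 0$ and $\Pb^1\setminus\infty$).
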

\begin{proof}
Indeed, as $T$ is two-dimensional, the dimension-four algebraic cycles are sums of irreducible components of $T \times T$, which in turn are products of irreducible components of $T$ because $k$ was assumed to be algebraically closed.   
\end{proof}

\begin{thm}\label{thm:Hodge_app}
Suppose $X$ and $Y$ are birational Calabi--Yau fourfolds. Then they have the same Hodge numbers.
\end{thm}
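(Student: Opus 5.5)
We work over an algebraically closed field $k$, keep the notation fixed before \cref{lem:CY3_have_same_Chow_motive}, and use \cref{lem:alpha_is_product}. The goal is to show that the Hodge cohomology $H^*(X,\Omega^*)$ and $H^*(Y,\Omega^*)$ have the same bigraded dimensions. Hodge cohomology is a module over motivic cohomology, so the correspondences $[\Gamma]$ and $[\Gamma]^t$ act. They give maps
\[
\Gamma_*\colon H^q(X,\Omega^p_X)\to H^q(Y,\Omega^p_Y),
\qquad
\Gamma^*\colon H^q(Y,\Omega^p_Y)\to H^q(X,\Omega^p_X),
\]
compatible with the bigrading, because $[\Gamma]$ has codimension $4=\dim X$. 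The strategy is to show that $\Gamma^*\Gamma_*=\Id+\alpha_*$ and $\Gamma_*\Gamma^*=\Id+\beta_*$ are isomorphisms in every bidegree except $(2,2)$. In bidegree $(2,2)$ we will instead argue directly by a dimension count.

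\textbf{Step 1: how $\alpha_*$ acts.} By \cref{lem:alpha_is_product}, $\alpha=\sum_i a_i\times b_i$ with $a_i,b_i\in\CH^2(X)$. A product correspondence acts by
\[
\alpha_*(c)=\sum_i \Big(\int_X \mathrm{cl}(a_i)\cup c\Big)\,\mathrm{cl}(b_i).
\]
The integral uses the trace $H^4(X,\Omega^4)\to k$ and is nonzero only when $c$ lies in bidegree $(2,2)$. Moreover $\mathrm{cl}(b_i)\in H^2(X,\Omega^2)$. Hence $\alpha_*$ vanishes outside bidegree $(2,2)$, and likewise for $\beta_*$. It follows that $\Gamma_*$ and $\Gamma^*$ are mutually inverse on all $h^{p,q}$ with $(p,q)\neq(2,2)$, so those Hodge numbers of $X$ and $Y$ agree.

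\textbf{Step 2: the middle number $h^{2,2}$.} Here I would not try to invert $\Id+\alpha_*$. Instead I would recover $h^{2,2}$ from quantities already known to match. First, the isomorphism $X\setminus T\cong Y\setminus S$ with $T,S$ surfaces gives
\[
[X]-[T]=[Y]-[S]\in K_0(\Var_k).
\]
The classes of the surfaces $T,S$ are resolute by resolution of surfaces. Open complements of resolute schemes in smooth projective $X$ are then resolute by \cref{res_prop}.

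Second, I would apply the Euler characteristic type realization obtained from \cref{ex:basic_coh_examples} by specialising $x=y=-1$, or better the full Hodge polynomial evaluation of \cref{thm:A}. This compares $\Hod([X])$ with $\Hod([Y])$ up to $\Hod([T])-\Hod([S])$. Since $T$ and $S$ have dimension $2$, motivic resolution in dimension $\le 2$ (which holds unconditionally) shows their Hodge classes involve only monomials $x^iy^j$ with $i,j\le 2$. These contributions could in principle affect the $(2,2)$ coefficient, which is the difficulty.

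To get around it, I would use Serre duality together with the Hodge-symmetric quantities already matched. The de Rham dimensions are obtained from crystalline cohomology via $\chi$ computations, and $\chi(\Omega^2_X)=\sum_q(-1)^q h^{2,q}$. Hirzebruch--Riemann--Roch expresses $\chi(\Omega^2_X)$ in Chern numbers. With $c_1=0$, the relevant Chern numbers are $c_4$ and $c_2^2$. Birational Calabi--Yau fourfolds have the same $c_2^2$ and $c_4$ by Batyrev/Kontsevich-type reasoning. In positive characteristic I would instead invoke $\chi_y$-genus invariance under $K$-equivalence via the Chow-theoretic computation: $\Gamma$ preserves the Chern classes away from $T$, and the difference is supported on a codimension-$2$ locus of classes $a_i\times b_i$. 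Given the matching of Chern numbers, $h^{2,2}$ is determined by $\chi(\Omega^2)$ and the already matched $h^{2,0},h^{2,1},h^{2,3},h^{2,4}$.

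\textbf{Main obstacle.} The hard step is matching $\chi(\Omega^2)$, or equivalently the Chern number $c_2^2$ or $c_4$. Step 1 is formal. For $c_4$ I would try the $\ell$-adic Euler characteristic, which equals the topological one and is computed from $[X]-[T]=[Y]-[S]$. I would combine this with the observation that $\ell$-adic $b_4$ can also be compared through the same correspondence argument, using $\ell$-adic cohomology in place of Hodge cohomology. This yields $e(X)=e(Y)$, hence equal $c_4$. For $c_2^2$ I would use $\Gamma_*c_2(X)=c_2(Y)+(\text{class supported on }S)$ and the vanishing of intersection products of surface-supported classes with $c_2$ up to the $\alpha$-correction computed in Step 1.
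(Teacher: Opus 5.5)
Your Step 1 is the paper's argument, but Step 2 has a genuine gap: you never establish $\chi(\Omega^2_X)=\chi(\Omega^2_Y)$, equivalently the equality of $c_2^2$ and $c_4$. Each of your proposed routes fails.
\begin{enumerate}
\item Batyrev/Kontsevich, or $\chi_y$-invariance under $K$-equivalence, is not available unconditionally in characteristic $p$. That is exactly the kind of statement the body of the paper obtains only conditionally.
\item The $\ell$-adic argument for $c_4$ is circular. The correspondence argument in $\ell$-adic cohomology only gives $b_i(X)=b_i(Y)$ for $i\neq 4$, because $\alpha_*$ can be nonzero on $H^4$. The scissor relation only gives $e(X)-e_c(T)=e(Y)-e_c(S)$, with no control over $e_c(T)-e_c(S)$.
\item The $c_2^2$ sketch rests on a vanishing of $c_2$ against surface-supported classes that is not true in general. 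Moreover, $\Gamma_*$ is not multiplicative, so $(\Gamma_*c_2(X))^2$ cannot be compared with $c_2(X)^2$ this way.
\end{enumerate}
Your first paragraph of Step 2, using $[X]-[T]=[Y]-[S]$, adds nothing beyond Step 1, as you noticed.

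The missing observation is that no new input is needed. By Hirzebruch--Riemann--Roch with $c_1=0$,
\[
\chi(\mathcal{O}_X)=\tfrac{3c_2^2-c_4}{720},\qquad \chi(\Omega^1_X)=\tfrac{3c_2^2-31c_4}{180}.
\]
These are independent linear forms in $(c_2^2,c_4)$. They involve only $h^{0,q}$ and $h^{1,q}$, which Step 1 already matched. Hence both Chern numbers agree, and so does $\chi(\Omega^2_X)=\tfrac{3c_2^2+79c_4}{120}$. Equivalently,
\[
22\chi(\mathcal{O}_X)-4\chi(\Omega^1_X)-\chi(\Omega^2_X)=0,
\]
which is exactly the identity the paper uses, and it is valid in any characteristic since it is pure Riemann--Roch. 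With this, your own final deduction finishes the proof: $h^{2,2}$ is recovered from $\chi(\Omega^2)$ and the already matched $h^{2,q}$, $q\neq 2$. The step you identified as the main obstacle is therefore automatic.
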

\begin{proof}
Without loss of generality, we may assume that $k$ is algebraically closed. We claim that the Chow correspondences $\alpha$ and $\beta$ can act nontrivially only on the bidegree-$(2,2)$ Hodge cohomology. By \cref{lem:alpha_is_product} it suffices to show this to a Chow correspondence $a \times b$, where $a, b \in \CH^2(X)$. Note that such a correspondence acts on a Hodge cohomology class by
\begin{equation}
    c \mapsto \pr_{2*}\big((a \times b)\smile \pr_1^*(c) \big) = \left(\int_Xa \smile c \right) \cdot b 
\end{equation}
where the equality uses projection formula, and we have abusively used the same symbol for a class in Chow group and its image in Hodge cohomology. The integral can be non-zero only if $c$ is of type $(2,2)$. This, and the analogous argument for $\beta$ shows that $[\Gamma]$ and $[\Gamma]^t$ induce isomorphisms in Hodge cohomology in bidegrees other than $(2,2)$.

The bidegree $(2,2)$ follows from the general formula
\begin{equation}
    22 \chi(\Oc_X) - 4\chi(\Omega_X) - \chi(\Omega^2_X) = 0
\end{equation}
for Euler characteristic of coherent sheaves on a Calabi--Yau fourfold $X$ \cite[\S 2.2]{gray:2014}, which shows that $h^{2,2}$ is determined by the other Hodge numbers. Note that the reference states the formula over complex numbers, but as it is based on Riemann--Roch, it holds in arbitrary characteristic.
\end{proof}

\begin{thm}
Suppose $X$ and $Y$ are birational Calabi--Yau fourfolds. Then they have isomorphic crystalline cohomology groups (hence isomorphic de Rham cohomology groups). 
\end{thm}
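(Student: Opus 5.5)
The plan is to imitate the proof of \cref{thm:Hodge_app}, with Hodge cohomology replaced by crystalline cohomology $\H^*_\crys(-/W(k))$. We may again assume $k$ is algebraically closed: extending scalars to $\bar k$ replaces $W(k)$ by $W(\bar k)$, which is faithfully flat over $W(k)$. Isomorphism types of finitely generated modules over these discrete valuation rings are determined by rank and elementary divisors, and both are preserved by this base change.

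\textbf{Degrees other than $4$.} Crystalline cohomology is a module over motivic cohomology, so Chow correspondences act on it via the crystalline cycle class. Hence $[\Gamma]$ and $[\Gamma]^t$ induce $W(k)$-linear maps $\H^i_\crys(X/W(k)) \rightleftarrows \H^i_\crys(Y/W(k))$. Their composites are $\Id + \alpha_*$ and $\Id + \beta_*$. By \cref{lem:alpha_is_product}, $\alpha = \sum_i a_i \times b_i$ with $a_i, b_i \in \CH^2(X)$. The projection formula gives
\[
(a \times b)_*(c) = \left(\int_X a \smile c\right) \cdot b .
\]
Here $a \smile c$ lies in degree $4 + \deg c$, and the trace map is nonzero only in degree $8$, so $\alpha_*$ vanishes on $\H^i_\crys$ for $i \neq 4$. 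The same holds for $\beta_*$. Therefore $[\Gamma]_*$ is an isomorphism $\H^i_\crys(X/W) \cong \H^i_\crys(Y/W)$ for every $i \neq 4$.

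\textbf{Degree $4$.} Write $\H^4_\crys = W^{r} \oplus \mathrm{tors}$; we handle the two pieces separately.

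For the torsion, we use crystalline Poincaré duality, which is a perfect pairing on the perfect complex $R\Gamma_\crys(X/W)$ with target $W[-8]$. Via the universal coefficient sequence it yields $\mathrm{tors}\,\H^4_\crys(X/W) \cong \mathrm{Hom}(\mathrm{tors}\,\H^5_\crys(X/W), K/W)$, where $K = W[1/p]$. The right-hand side is already identified for $X$ and $Y$ by the previous step, so the torsion parts agree.

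For the rank, we first express the Euler characteristic in Chern-class terms. By Katz--Messing, $\sum_i (-1)^i \operatorname{rank} \H^i_\crys$ equals $\deg c_4(T_X)$. Hirzebruch--Riemann--Roch, valid in any characteristic, gives
\[
\deg c_4(T_X) = \sum_{p} (-1)^p \chi(\Omega^p_X) = \sum_{p,q} (-1)^{p+q} h^{p,q}(X).
\]
By \cref{thm:Hodge_app}, the right-hand side coincides for $X$ and $Y$. Since all other ranks have already been matched, $\operatorname{rank} \H^4_\crys$ agrees as well. Hence $\H^4_\crys(X/W) \cong \H^4_\crys(Y/W)$ as abstract $W$-modules.

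\textbf{De Rham cohomology.} This follows from $R\Gamma_\dR(X/k) \simeq R\Gamma_\crys(X/W) \otimes^{\Lb}_W k$. The universal coefficient theorem then shows that each $\dim_k \H^i_\dR$ is determined by the isomorphism types of $\H^i_\crys$ and $\H^{i+1}_\crys$.

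\textbf{Main obstacle.} I expect the main difficulty to be degree $4$, where $\alpha_*$ may be nonzero, so one cannot hope to produce the isomorphism from $[\Gamma]$ itself. The plan above only gives an abstract isomorphism. It relies on precise crystalline Poincaré duality for torsion, to pass torsion in degree $4$ across to degree $5$, and on the Chern-number identity to fix the rank. Checking that the duality statement for torsion holds exactly, not merely up to finite length, is the step I would verify most carefully.
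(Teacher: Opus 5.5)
Your proof is correct, and outside degree $4$ it matches the paper. The difference is in how you treat $\H^4_{\crys}$.

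\textbf{Torsion in degree $4$.} The paper stays with the correspondence. Since $(a\times b)_*(c)=\left(\int_X a\smile c\right)\cdot b$ and $\int_X a\smile c$ lies in the torsion-free ring $W(k)$, the operators $\alpha_*$ and $\beta_*$ kill torsion classes. Hence $[\Gamma]_*$ and $[\Gamma]^t_*$ restrict to mutually inverse isomorphisms on $\mathrm{tors}\,\H^4_{\crys}$. So your remark that the degree-$4$ isomorphism cannot come from $[\Gamma]$ is true only for the free part.

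You instead move the problem to degree $5$ via
\[
\mathrm{tors}\,\H^4_{\crys}\cong\mathrm{Ext}^1_W(\H^5_{\crys},W)\cong\mathrm{Hom}(\mathrm{tors}\,\H^5_{\crys},K/W).
\]
This is valid. Berthelot's duality is an equivalence of perfect complexes $R\Gamma_{\crys}(X/W)\simeq R\mathrm{Hom}_W(R\Gamma_{\crys}(X/W),W)[-8]$, so the finite-length ambiguity you worried about does not arise. Also $\mathrm{Hom}(M,K/W)\cong M$ for finite-length $M$. The cost is that your route needs integral Poincar\'e duality and yields only an abstract isomorphism. The paper's route is more elementary and shows the natural map is an isomorphism on torsion.

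\textbf{Rank in degree $4$.} Both proofs use an Euler-characteristic identity, but your detour through Katz--Messing, $\chi=\deg c_4(T_X)$ and Hirzebruch--Riemann--Roch is unnecessary. The derived base change $R\Gamma_{\crys}(X/W)\otimes^{\Lb}_W k\simeq R\Gamma_{\dR}(X/k)$, which you already invoke, preserves Euler characteristics. The Hodge--de Rham spectral sequence then gives $\chi_{\dR}=\sum(-1)^{p+q}h^{p,q}$, and this is exactly the paper's argument.
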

\begin{proof}
As in the proof of the Hodge number equality, one sees that $[\Gamma]$ and $[\Gamma]^t$ induce inverse isomorphisms in degrees other than 4. We claim that they also induce inverse isomorphisms on the torsion in degree 4. Indeed, a Chow correspondence of form $a \times b$ acts on a degree-four crystalline cohomology class $c$ by
\begin{equation}
    c \mapsto \left(\int_X a \smile c \right) \cdot b, 
\end{equation}
and if $c$ is torsion, $\int_X a \smile c = 0$ because $W(k)$ is torsion free. Thus, $[\Gamma] \circ [\Gamma]^t$ and $[\Gamma]^t \circ [\Gamma]$ act as the identity transformations on torsion classes in degree 4.

As the crystalline cohomology groups are finitely generated $W(k)$-modules, it suffices to show that the rationalized crystalline cohomology groups of $X$ and $Y$ are isomorphic. This is easy. For example, this follows from the equality of Hodge numbers established in
\cref{thm:Hodge_app} and the standard Euler-characteristic identities
\cite{Liedtke2013,vanDobbenDeBruyn2021} by which, for smooth proper $X$
\begin{equation}
\sum_i (-1)^i \dim H^i_{\mathrm{crys}}(Z/W(k))[1/p]
=
\sum_i (-1)^i \dim H^i_{\mathrm{dR}}(Z/k)
=
\sum_{a,b} (-1)^{a+b} h^{a,b}(Z),     
\end{equation}
proving that the rank of the degree-four crystalline cohomology is determined by Hodge numbers and the ranks of the other crystalline cohomology groups.
\end{proof}

\bibliographystyle{alphamod}
\bibliography{references}

\end{document}